%%%%%%%%%%%%%%%%%%%%%%%%%%%%%%%%%%%%%%%%%%%%%%%%%%%%%%%%%%%%%%%%%%%
%%                                                               %%
%% This is the sample.tex file for the ejpecp document class.    %%
%% This file is for ejpecp version 1.0                           %%
%% Please be sure that you are using the lastest version:        %%
%% http://ctan.org/tex-archive/macros/latex/contrib/ejpecp/      %%
%%                                                               %%
%% The ejpecp class works *only* with a pdflatex engine.         %%
%% You need the ejpecp.cls in your current directory or in any   %%
%% directory scanned for cls files by your pdflatex engine.      %%
%%                                                               %%
%% Manual inclusion of page layout commands is useless.          %%
%%                                                               %%
%% Note that any complex file will produce delayed publication!  %%
%%                                                               %%
%%%%%%%%%%%%%%%%%%%%%%%%%%%%%%%%%%%%%%%%%%%%%%%%%%%%%%%%%%%%%%%%%%%

%%%%%%%%%%%%%%%%%%%%%%%%%%%%%%%%%%%%%%%%%%%%%%%%%%%%%%%%%%%%%%%%%%%
%%                                                               %%
%% Journal selection: ECP or EJP.                                %%
%%                                                               %%
%%%%%%%%%%%%%%%%%%%%%%%%%%%%%%%%%%%%%%%%%%%%%%%%%%%%%%%%%%%%%%%%%%%

\documentclass[ECP]{ejpecp} % replace ECP by EJP if needed.  

%%%%%%%%%%%%%%%%%%%%%%%%%%%%%%%%%%%%%%%%%%%%%%%%%%%%%%%%%%%%%%%%%%%
%%                                                               %%
%% Please uncomment and adapt to your encoding if needed:        %%
%%                                                               %%
%%%%%%%%%%%%%%%%%%%%%%%%%%%%%%%%%%%%%%%%%%%%%%%%%%%%%%%%%%%%%%%%%%%

%\usepackage[T1]{fontenc}
%\usepackage[utf8]{inputenc}

%%%%%%%%%%%%%%%%%%%%%%%%%%%%%%%%%%%%%%%%%%%%%%%%%%%%%%%%%%%%%%%%%%%
%%                                                               %%
%% Please add here your own packages (be minimalistic please!):  %%
%% Please avoid using exotic packages and keep things simple.    %%
%% It is not necessary to include ams* and graphicx packages     %%
%% since they are automatically included by the ejpecp class.    %%
%%                                                               %%
%%%%%%%%%%%%%%%%%%%%%%%%%%%%%%%%%%%%%%%%%%%%%%%%%%%%%%%%%%%%%%%%%%%

%\usepackage{enumerate}  % uncomment to use this package

%%%%%%%%%%%%%%%%%%%%%%%%%%%%%%%%%%%%%%%%%%%%%%%%%%%%%%%%%%%%%%%%%%%
%%                                                               %%
%% Title (please edit and customize):                            %%
%%                                                               %%
%%%%%%%%%%%%%%%%%%%%%%%%%%%%%%%%%%%%%%%%%%%%%%%%%%%%%%%%%%%%%%%%%%%

\SHORTTITLE{Convex Hulls of L\'evy Processes} 

\TITLE{Convex Hulls of L\'evy Processes} % \thanks is optional. Insert line breaks with \\

%\DEDICATORY{Dedicated to the memory of ...} % Optional

%%%%%%%%%%%%%%%%%%%%%%%%%%%%%%%%%%%%%%%%%%%%%%%%%%%%%%%%%%%%%%%%%%%
%%                                                               %%
%% Authors (please edit and customize):                          %%
%%                                                               %%
%%%%%%%%%%%%%%%%%%%%%%%%%%%%%%%%%%%%%%%%%%%%%%%%%%%%%%%%%%%%%%%%%%%

\AUTHORS{%
  Ilya~Molchanov\footnote{University of Bern, Switzerland.
    \EMAIL{ilya.molchanov,florian.wespi@stat.unibe.ch}}
  \thanks{IM is supported in part by Swiss National Science Foundation grant IZ73Z0\_15229.}
  \and %% remove this line and below if single author
  Florian~Wespi\footnotemark[1]}%AUTHORS
%% Type \and between all consecutive authors (not only before the last author).
%% Note: you may use \BEMAIL to force a line break before e-mail display.

%% Here is a compact example with two authors with same affiliation
%% \AUTHORS{%
%%  Michael~First\footnote{Some University. \EMAIL{mf,js@uni.edu}
%%  \and 
%%  John~Second\footnotemark[2]}%AUTHORS
%% Note: The \footnotemark is the footnote number that you wish to reuse. Here
%% it is [2] (we took into account the footnote generated by \thanks in title).

%%%%%%%%%%%%%%%%%%%%%%%%%%%%%%%%%%%%%%%%%%%%%%%%%%%%%%%%%%%%%%%%%%%
%%                                                               %%
%% Please edit and customize the following items:                %%
%%                                                               %%
%%%%%%%%%%%%%%%%%%%%%%%%%%%%%%%%%%%%%%%%%%%%%%%%%%%%%%%%%%%%%%%%%%%

\KEYWORDS{convex hull; intrinsic volume; mixed volume; L\'evy
  process; stable law} % Separate items with ;

\AMSSUBJ{60G51 ; 60D05 ; 52A22} % Edit. Separate items with ;
%\AMSSUBJSECONDARY{FIXME:} % Optional, separate items with ;

\SUBMITTED{} % Edit.
\ACCEPTED{} % Edit.

%%%%%%%%%%%%%%%%%%%%%%%%%%%%%%%%%%%%%%%%%%%%%%%%%%%%%%%%%%%%%%%%%%%
%%                                                               %%
%% Please uncomment and edit if you have an arXiv ID:            %%
%%                                                               %%
%%%%%%%%%%%%%%%%%%%%%%%%%%%%%%%%%%%%%%%%%%%%%%%%%%%%%%%%%%%%%%%%%%%

%\ARXIVID{1512.07015} % Edit.
%\HALID{hal-NNN} % Edit.

%%%%%%%%%%%%%%%%%%%%%%%%%%%%%%%%%%%%%%%%%%%%%%%%%%%%%%%%%%%%%%%%%%%
%%                                                               %%
%% The following items will be set by the Managing Editor.       %%
%%                                                               %%
%%%%%%%%%%%%%%%%%%%%%%%%%%%%%%%%%%%%%%%%%%%%%%%%%%%%%%%%%%%%%%%%%%%

\VOLUME{0}
\YEAR{}
\PAPERNUM{0}
\DOI{}

%%%%%%%%%%%%%%%%%%%%%%%%%%%%%%%%%%%%%%%%%%%%%%%%%%%%%%%%%%%%%%%%%%%
%%                                                               %%
%% Please edit and customize the abstract:                       %%
%%                                                               %%
%%%%%%%%%%%%%%%%%%%%%%%%%%%%%%%%%%%%%%%%%%%%%%%%%%%%%%%%%%%%%%%%%%%

\ABSTRACT{Let $X(t)$, $t\geq0$, be a L\'evy process in $\R^d$ 
  starting at the origin. We study the closed convex hull $Z_s$
  of $\{X(t): 0\leq t\leq s\}$. In
  particular, we provide conditions for the integrability of the
  intrinsic volumes of the random set $Z_s$ and find explicit
  expressions for their means in the case of symmetric $\alpha$-stable
  L\'evy processes. If the process is symmetric and each its
  one-dimensional projection is non-atomic, we establish that the
  origin a.s. belongs to the interior of $Z_s$ for all
  $s>0$. Limit theorems for the convex hull of
  L\'evy processes with normal and stable limits are also obtained.}

%%%%%%%%%%%%%%%%%%%%%%%%%%%%%%%%%%%%%%%%%%%%%%%%%%%%%%%%%%%%%%%%%%%
%%                                                               %%
%% Please add your own macros and environments below:            %%
%%                                                               %%
%% If possible, avoid using \def and use instead \newcommand     %%
%% If possible, avoid defining your own environments, and use    %%
%% instead the environments already defined by ejpecp:           %%
%%  assumption, assumptions, claim, condition, conjecture,       %%
%%  corollary, definition, definitions, example, exercise, fact, %%
%%  facts, heuristics, hypothesis, hypotheses, lemma, notation,  %%
%%  notations, problem, proposition, remark, theorem             %%
%%                                                               %%
%%%%%%%%%%%%%%%%%%%%%%%%%%%%%%%%%%%%%%%%%%%%%%%%%%%%%%%%%%%%%%%%%%%

\renewcommand{\P}{\mathbf P}
\newcommand{\E}{\mathbf E}
\newcommand{\R}{\mathbb R}
\renewcommand{\SS}{\mathbb S}

\DeclareMathOperator{\conv}{conv}
\DeclareMathOperator{\Int}{Int}
\DeclareMathOperator{\cl}{cl}

%%%%%%%%%%%%%%%%%%%%%%%%%%%%%%%%%%%%%%%%%%%%%%%%%%%%%%%%%%%%%%%%%%%
%%                                                               %%
%% No macro definitions below this line please!                  %%
%%                                                               %%
%%%%%%%%%%%%%%%%%%%%%%%%%%%%%%%%%%%%%%%%%%%%%%%%%%%%%%%%%%%%%%%%%%%

\begin{document}

%%%%%%%%%%%%%%%%%%%%%%%%%%%%%%%%%%%%%%%%%%%%%%%%%%%%%%%%%%%%%%%%%%%
%%                                                               %%
%% No need for \maketitle.                                       %%
%%                                                               %%
%%%%%%%%%%%%%%%%%%%%%%%%%%%%%%%%%%%%%%%%%%%%%%%%%%%%%%%%%%%%%%%%%%%

%%%%%%%%%%%%%%%%%%%%%%%%%%%%%%%%%%%%%%%%%%%%%%%%%%%%%%%%%%%%%%%%%%%
%%                                                               %%
%% Please replace what follows by the body of your article       %%
%% (up to the bibliography):                                     %%
%%                                                               %%
%%%%%%%%%%%%%%%%%%%%%%%%%%%%%%%%%%%%%%%%%%%%%%%%%%%%%%%%%%%%%%%%%%%

A number of existing results concern convex hulls of stochastic
processes, especially the Brownian motion and random walks, see
\cite{Eldan2014, Majumdar2010, Vysotsky2015}. In contrary,
considerably less is known about convex hulls of general L\'evy
processes with the exception of some results for symmetric
$\alpha$-stable L\'evy processes in $\R^d$ with $\alpha\in(1,2]$,
see \cite{Kampf2012}.

First, recall some concepts from convex geometry. Denote by
$\mathcal{K}_d$ the family of \emph{convex bodies} (non-empty compact
convex sets) in $\R^d$.  For $K,L\in\mathcal{K}_d$, denote by
$K+L=\{x+y : x\in K, y\in L\}$ their Minkowski sum.  It is known that
the volume $V_d(K+tL)$ is a polynomial in $t\geq0$ of degree $d$, see
\cite[Th.~5.1.7]{Schneider2014}.  The \emph{mixed volumes}
$V(K[j],L[d-j])$, $j=0,\dots,d$, appear as coefficients of this
polynomial, so that
\begin{equation}
  \label{eq:polynomial}
  V_d(K+tL)=\sum_{j=0}^d \binom{d}{j}t^{d-j}V(K[j],L[d-j]), \qquad t\geq0.
\end{equation}
The mixed volume is a function of $d$ arguments, and $K[j]$
stands for its $j$ arguments, all being $K$.  The \emph{intrinsic
  volumes} of a convex body $K$ are normalised mixed volumes
\begin{equation}
  \label{eq:iv}
  V_j(K)=\frac{\binom{d}{j}}{\kappa_{d-j}}V(K[j],B^{d}[d-j]), 
  \qquad j=0,\dots,d,
\end{equation}
where $B^d$ denotes the centred $d$-dimensional unit ball and
$\kappa_{d-j}$ is the $(d-j)$-dimensional 
volume of $B^{d-j}$. In particular, $V_d(K)$ is
the volume (or the Lebesgue measure), $V_{d-1}(K)$ is half the surface
area, $V_{d-2}(K)$ is proportional to the integrated mean curvature,
$V_1(K)$ is proportional to the mean width of $K$, and $V_0(K)=1$, see
\cite[Sec.~4.2,~5.3]{Schneider2014}.  The \emph{Hausdorff metric} between
convex bodies is defined by
\begin{displaymath}
  \rho_H(K,L)=\inf \{r\geq0 : K\subset L+rB^d,\;
  L\subset K+rB^d\}.
\end{displaymath}
The $j$-dimensional volume of the parallelepiped spanned by
$u_1,\dots,u_j\in\R^d$ is denoted by $D_j(u_1,\dots,u_j)$.

Let $X(t)$, $t\geq0$, be a \emph{L\'evy process} in $\R^d$ 
starting at the origin. We are interested in
\begin{displaymath}
  Z_s=\cl\conv\{X(t): 0 \leq t \leq s\},
\end{displaymath}
where $\cl(\cdot)$ denotes the closure and $\conv(\cdot)$ the convex
hull. 
It is easy to
see that $Z_s$ is a random closed convex set, see
\cite{Molchanov2005}. We shortly denote $Z=Z_1$. 
In the case of a Brownian motion, the
expected intrinsic volumes of $Z$ are known, see \cite{Eldan2014}.
The integrability of intrinsic volumes of $Z$ for $X$ being a
symmetric $\alpha$-stable L\'evy process with independent coordinates and the
expected mean width of $Z$ were obtained in
\cite{Kampf2012}.

We start by establishing the integrability of intrinsic volumes in
relation to the properties of the L\'evy measure of the general L\'evy
process and then find explicit expressions for their first moments of
all intrinsic volumes for the case of symmetric $\alpha$-stable L\'evy
processes with $\alpha\in(1,2]$.  For this purpose, we
generalise results on expected random determinants from
\cite{Kabluchko2012,Vitale1991}. 

It is also shown that the origin a.s. belongs to the interior of the convex
hull of symmetric L\'evy processes if the scalar product $\langle
X(t),u\rangle$ is non-atomic for each $u\neq0$. As a
direct consequence, we prove that $X(s)$ a.s. belongs to the interior
of the convex hull of $\{X(t): 0\leq t\leq s\}$.
We also consider expectations of the $L_p$-generalisations of mixed
volumes and prove limit theorems for the scaled $Z_t$ as
$t\to\infty$.

\section{Integrability of the intrinsic volumes}
\label{sec:integr-intr-volum}

The integrability of $V_j(Z_s)^p$ for some $p>0$, $s\geq0$, and all $j=0,\dots,d$
is equivalent to $\E V_d(Z_s+B^d)^p<\infty$.  Indeed, the Steiner
formula \cite[Eq.~(4.1)]{Schneider2014} yields that
\begin{displaymath}
  V_d(Z_s+B^d)=\sum_{j=0}^d\kappa_{d-j}V_j(Z_s).
\end{displaymath}
Thus, the existence of the $p$th moment in the left-hand side is
equivalent to the existence of the $p$th moments of all
(non-negative) summands in the right-hand side. 

The \emph{L\'evy measure} of the process $X(t)$, $t\geq0$, is denoted
by $\nu$. Denote
\begin{displaymath}
  \beta_\nu=\sup\left\{\beta >0 :\;
  \int_{\|x\|> 1} \| x \|^\beta \nu(dx) < \infty \right\}.
\end{displaymath}

\begin{theorem}
  \label{thr:integrability}
  If $0\leq p<\beta_\nu$, then $\E V_j(Z_s)^p<\infty$ for all
  $j=0,\dots,d$ and all $s\geq0$.
\end{theorem}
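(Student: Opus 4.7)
The plan is to reduce the problem via the Steiner formula (already established in the excerpt) to a moment estimate for the supremum of $\|X(t)\|$, and then to invoke the classical moment criterion for L\'evy processes.

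Concretely, set $R_s:=\sup_{0\le t\le s}\|X(t)\|$. Since $Z_s\subseteq R_sB^d$ and the intrinsic volumes are monotone and $j$-homogeneous, one obtains the pathwise bound $V_j(Z_s)\le c_{d,j}R_s^j$ with $c_{d,j}=V_j(B^d)$. Equivalently, combining the excerpt's Steiner identity with the inclusion $Z_s+B^d\subseteq(R_s+1)B^d$ yields
\[
\E V_d(Z_s+B^d)^p\le \kappa_d^p\,\E (R_s+1)^{dp}.
\]
Thus the theorem will follow once one shows that $\E R_s^q<\infty$ for every $0\le q<\beta_\nu$.

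The key input is the classical equivalence (see, e.g., Sato's monograph, Theorem~25.3)
\[
\E \|X(s)\|^q<\infty\quad\Longleftrightarrow\quad\int_{\|x\|>1}\|x\|^q\,\nu(dx)<\infty,
\]
which gives $\E \|X(s)\|^q<\infty$ whenever $q<\beta_\nu$. Transferring this to $R_s$ uses the L\'evy--It\^o decomposition: the Brownian and drift components have moments of all orders; the small-jump component is a martingale, to which Doob's $L^q$-inequality applies for $q\ge 1$; and the large-jump component is a compound Poisson process whose running supremum is dominated by the sum of absolute jump sizes, yielding the required moment directly. The regime $0<q<1$ can be handled by L\'evy's maximal inequality, or by applying the $q=1$ bound to the subadditive function $r\mapsto r^q$.

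The main obstacle I expect is that the crude geometric bound $V_j(Z_s)\le c_{d,j}R_s^j$ gives $\E V_j(Z_s)^p<\infty$ only under $jp<\beta_\nu$, which for $j\ge 2$ is strictly stronger than the hypothesis $p<\beta_\nu$. Closing this gap appears to require a finer bound reflecting that the tail of $V_j(Z_s)$ is driven by a single large excursion of the process in one direction, with the transverse extent having finite moments of all orders; one natural way to formalise this is via a Cauchy--Kubota integral representation of $V_j$, or via a split into the small- and large-jump components in the L\'evy--It\^o decomposition, treating the contribution of the finitely many large jumps in $[0,s]$ separately. Once such a refinement is established, the result is immediate from the moment bound $\E R_s^q<\infty$ for $q<\beta_\nu$.
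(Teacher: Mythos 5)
Your reduction via the Steiner formula and the moment criterion $\E\|X(s)\|^q<\infty \Leftrightarrow \int_{\|x\|>1}\|x\|^q\,\nu(dx)<\infty$ is sound as far as it goes, and you are right to flag the obstacle yourself: the bound $V_j(Z_s)\le c_{d,j}R_s^j$ (and likewise $\E V_d(Z_s+B^d)^p\le\kappa_d^p\,\E(R_s+1)^{dp}$) only yields the conclusion under the hypothesis $jp<\beta_\nu$ (resp.\ $dp<\beta_\nu$), which is strictly stronger than $p<\beta_\nu$ for $j\ge 2$. This is precisely where the theorem's content lies, and your proposal does not close the gap: the Cauchy--Kubota route does not help, since the projections of $Z_s$ onto $j$-dimensional subspaces are still only controlled by $R_s^j$, and the remark about ``a single large excursion in one direction'' is a heuristic, not an argument. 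As written, the proposal proves a weaker statement.

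The paper's proof supplies exactly the missing idea, and it is of a different nature. One defines stopping times $T_i$ at which the process first exits the unit ball centred at $X(T_{i-1})$; the increments over these excursions give i.i.d.\ segments $I_1,I_2,\dots$, and $Z_s\subseteq I_1+\dots+I_{N_s}+B^d$, where $N_s$ is a renewal counting process with all moments finite. The point is that the relevant mixed volume $V((I_1+\dots+I_{N_s})[j],B^d[d-j])$ expands (McMullen--Matheron--Weil) into a sum over $j$-tuples of terms bounded by $\|I_{k_1}\|\cdots\|I_{k_j}\|$, i.e.\ by products of $j$ \emph{independent} lengths, each of which has a finite moment of order slightly above $p$. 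Independence lets one take the $p$th moment of the product as a product of $p$th moments, so no moment of order $jp$ is ever needed; H\"older's inequality against the tail of $N_s$ then finishes the argument. Without this (or an equivalent) decomposition into independent pieces, the exponent loss you identified cannot be avoided, so the proposal has a genuine gap at its central step.
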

\begin{proof}
  The result is obvious if
  $X(t)$, $t\geq0$, is a deterministic process, so we exclude this
  case in the following.  The main idea is to split the path of the
  L\'evy process into several parts with integrable volumes of their
  convex hulls. The random variables $T_0=0$ and
  \begin{equation}
    \label{eq:Ti}
    T_i=\inf\{t\geq T_{i-1} : X(t)\notin X(T_{i-1})+B^d\},
    \qquad i\geq1,
  \end{equation}
  form an increasing sequence of stopping times with respect to the
  filtration generated by the process, see \cite[Cor.~8]{Bertoin1996}.
  Since $X(t)$ has unbounded support for each non-trivial L\'{e}vy
  process and any $t>0$ \cite[Th.~24.3]{Sato2005}, these stopping
  times are a.s. finite.  The random variables
  $\tilde{T}_i=T_i-T_{i-1}$, $i\geq1$, are independent identically
  distributed. We set $\tilde{T}_0=0$ and 
  consider the renewal process
  \begin{displaymath}
    N_s=\max\{ k\geq0: \tilde{T}_0+\dots + \tilde{T}_k\leq s \}.
  \end{displaymath}
  It is easy to see that $\E N_s^j<\infty$ for all $j\geq1$. 
  Let $I_k$ be the segment in $\R^d$ with end-points at the origin and
  $X(\min(T_k,s))-X(\min(T_{k-1},s))$, $k\geq1$.
  The segments $I_k$, $k\geq1$, are independent. Denote by $\|I_k\|$
  the length of $I_k$.  It is obvious that $\|I_{k}\| \leq 2\sup_{0\leq
    t \leq s}\| X_{t} \|$.  If $p<\beta_\nu$, then
  $\E\|I_{k}\|^p< \infty$ by \cite[Thms.~25.3,~25.18]{Sato2005}
  and \cite[Prop.~25.4]{Sato2005}.

  Observe that $Z_s\subset B^d$ if $N_s=0$ and otherwise
  \begin{displaymath} 
    Z_s\subseteq I_1+\cdots + I_{N_s}+B^d.
  \end{displaymath}
  Thus, 
  \begin{displaymath}
    V_d(Z_s+B^d)\leq V_d(I_1+\cdots+I_{N_s}+2B^d).
  \end{displaymath}
  The right-hand side equals the linear combination of the mixed
  volumes 
  \begin{displaymath}
    \tilde{V}_j=V((I_1+\dots +I_{N_s})[j],B^d[d-j]), \qquad j=0,\dots,d,
  \end{displaymath}
  of the sets $I_1+\dots+I_{N_s}$ and $B^d$, see
  \cite[Th.~5.1.7]{Schneider2014}. Therefore, it suffices to show that
  $\E[\tilde{V}_j^p1_{N_s\geq1}]<\infty$ for all $j=0,\dots,d$. It is
  obvious that $\tilde{V}_0=V_d(B^d)$ is integrable, so assume
  $j\geq1$. For all $k\leq N_s$ and $N_s \geq 1$, 
  let $[-\zeta_k,\zeta_k]$ be the scaled translate of $I_k$
  that is symmetric with respect to the origin and such that $\|\zeta_k\|=1$.
  It is well known that the mixed volumes are translation
  invariant.  In view of \eqref{eq:iv}, the McMullen--Matheron--Weil formula
  \cite[Eq.~(1.4)]{Lutwak2001} yields
  \begin{align*}
    \E[\tilde{V}_j^p1_{N_s\geq1}]&= \sum_{n=1}^{\infty}
    \E\left[\left(\frac{2^j\kappa_{d-j}}{j!\binom{d}{j}} 
    \sum_{1\leq k_1,\dots,k_j \leq n}  
    \|I_{k_1}\|\cdots \|I_{k_j}\|2^{-j}
    D_j(\zeta_{k_1},\dots,\zeta_{k_j})\right)^p 
    1_{N_s=n} \right].
  \end{align*}
  Since $D_j(\zeta_{k_1},\dots,\zeta_{k_j})\leq1$, we have 
    \begin{align*}
    \E[\tilde{V}_j^p1_{N_s\geq1}]
    &\leq \left(\frac{(d-j)!\kappa_{d-j}}{d!} \right)^p
    \sum_{n=1}^{\infty} \E\left[ \left(
    \sum_{1\leq k_1,\dots,k_j \leq n} 
    \| I_{k_1} \| \cdots \|  I_{k_j}\| \right)^p 1_{N_s=n}\right] \\
    &\leq \left(\frac{(d-j)!\kappa_{d-j}}{d!} \right)^p
    \sum_{n=1}^{\infty}n^{j\max(p,1)}
    \E[\left( \| I_1 \| \cdots \|  I_j\| \right)^p1_{N_s=n}].
  \end{align*}
  Since the lengths of the segments $I_1,\dots,I_j$ are independent, 
  \begin{math}
    \E(\| I_1 \| \cdots \|  I_j\|)^{pr}<\infty,
  \end{math}
  where $r>1$ satisfies $pr<\beta_\nu$.
  Hence, H\"older's inequality yields that
  \begin{displaymath}
    \E[\tilde{V}_j^p1_{N_s\geq1}]\leq 
    \left(\frac{(d-j)!\kappa_{d-j}}{d!} \right)^p
    (\E(\| I_1 \| \cdots \|  I_j\|)^{pr})^{1/r}
    \sum_{n=1}^{\infty} n^{j\max(p,1)} \P\{N_s=n\}^{1/q},
  \end{displaymath}
  where $1/r+1/q=1$. Since all moments of $N_s$ are finite, 
  the series converges. 
\end{proof}

\begin{corollary}
  If $X$ is the Brownian motion, then $\E V_j(Z_s)^p<\infty$ for all
  $p\geq0$, all $j=0,\dots,d$ and all $s\geq0$.
\end{corollary}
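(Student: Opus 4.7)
The plan is a direct application of Theorem \ref{thr:integrability}, so the only real work is to identify $\beta_\nu$ when $X$ is a Brownian motion. By the L\'evy--Khintchine representation, a Brownian motion is a L\'evy process with continuous paths, so its generating triplet has the form $(b,A,0)$; that is, the L\'evy measure vanishes identically, $\nu \equiv 0$.

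Consequently, $\int_{\|x\|>1}\|x\|^\beta \, \nu(dx)=0<\infty$ for every $\beta>0$, and therefore the supremum defining $\beta_\nu$ is taken over all of $(0,\infty)$, giving $\beta_\nu=\infty$. Every $p\geq 0$ then fulfils $0\leq p<\beta_\nu$, and Theorem \ref{thr:integrability} yields $\E V_j(Z_s)^p<\infty$ for all $j=0,\dots,d$ and all $s\geq 0$.

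There is no genuine obstacle to overcome; the corollary is essentially a specialisation of Theorem \ref{thr:integrability} to the case of zero L\'evy measure. For a reader who prefers a self-contained derivation that bypasses the L\'evy--Khintchine apparatus, one can set $M_s=\sup_{0\leq t\leq s}\|X(t)\|$, note the trivial inclusion $Z_s\subseteq M_s B^d$, and use monotonicity and positive homogeneity of the intrinsic volumes to bound $V_j(Z_s)\leq V_j(B^d)\,M_s^j$. The reflection principle (or Doob's maximal inequality) then gives moments of all orders for $M_s$, so $\E M_s^{jp}<\infty$ for every $p\geq 0$, finishing the argument.
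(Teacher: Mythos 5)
Your proof is correct and follows exactly the route the paper intends: for Brownian motion the L\'evy measure vanishes, so $\beta_\nu=\infty$ and Theorem~\ref{thr:integrability} applies to every $p\geq 0$. The alternative self-contained argument via $Z_s\subseteq M_s B^d$ and Gaussian moment bounds is also valid, but the main line of reasoning matches the paper's (unstated) proof.
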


An analogue of Theorem~\ref{thr:integrability} holds for random walks.
Let $\{\xi_n,n\geq1\}$ be a sequence of i.i.d.  random vectors in
$\R^d$ and let $S_n=\xi_1+\cdots +\xi_n$, $n\geq1$. Denote by $C_n$
the convex hull of the origin and $S_1,\dots,S_n$. The following
result can be proved similarly to Theorem~\ref{thr:integrability}.

\begin{theorem}
  If $\E \| \xi_1 \|^p < \infty$, then $\E V_j(C_n)^p<\infty$ 
  for all $j=0,\dots,d$.
\end{theorem}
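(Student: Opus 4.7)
The plan is to rerun the argument of Theorem~\ref{thr:integrability}, dispensing with the stopping-time construction since the walk is already discrete. Let $I_k$ denote the segment with end-points $0$ and $\xi_k$ for $k=1,\dots,n$; these are i.i.d. by assumption. First I would observe the deterministic inclusion $C_n \subseteq I_1+\cdots+I_n$: each partial sum $S_k=\xi_1+\cdots+\xi_k$ is of the form $\sum_{j=1}^n x_j$ with $x_j=\xi_j\in I_j$ for $j\leq k$ and $x_j=0\in I_j$ otherwise, the origin lies trivially in the Minkowski sum, and the sum is convex. Monotonicity of the intrinsic volumes on $\mathcal{K}_d$ (see \cite{Schneider2014}) then yields $V_j(C_n)\leq V_j(I_1+\cdots+I_n)$.

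Next, I would set $\zeta_k=\xi_k/\|\xi_k\|$ and apply the McMullen--Matheron--Weil expansion exactly as in the proof of Theorem~\ref{thr:integrability}, obtaining
\begin{displaymath}
  V_j(I_1+\cdots+I_n)
  =\frac{1}{j!}\sum_{1\leq k_1,\dots,k_j\leq n}
  \|\xi_{k_1}\|\cdots\|\xi_{k_j}\|\, D_j(\zeta_{k_1},\dots,\zeta_{k_j}).
\end{displaymath}
The key point is that $D_j(\zeta_{k_1},\dots,\zeta_{k_j})$ vanishes whenever two of the indices coincide, since the spanned parallelepiped is then degenerate, so only tuples with pairwise distinct entries contribute. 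Combined with the bound $D_j\leq 1$, this yields the deterministic upper bound
\begin{displaymath}
  V_j(C_n)\leq \sum_{1\leq k_1<\cdots<k_j\leq n}\|\xi_{k_1}\|\cdots\|\xi_{k_j}\|.
\end{displaymath}

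The last step is to take the $p$th moment of this $\binom{n}{j}$-term sum. For $p\geq 1$, Jensen's inequality introduces a prefactor $\binom{n}{j}^{p-1}$ in front of the sum of $p$th powers, while for $0\leq p\leq 1$ subadditivity of $x\mapsto x^p$ removes this prefactor. In each term the factors $\|\xi_{k_1}\|,\dots,\|\xi_{k_j}\|$ have distinct indices and are therefore independent, so their product has $p$th moment $(\E\|\xi_1\|^p)^j$, leading to
\begin{displaymath}
  \E V_j(C_n)^p \leq \binom{n}{j}^{\max(p,1)}\bigl(\E\|\xi_1\|^p\bigr)^j<\infty.
\end{displaymath}
There is no substantive obstacle here; the one point worth emphasising is that the vanishing of $D_j$ on repeated indices is what keeps the moment requirement at the stated $\E\|\xi_1\|^p<\infty$, since a naive bound over all $n^j$ index tuples would otherwise force the stronger condition $\E\|\xi_1\|^{jp}<\infty$.
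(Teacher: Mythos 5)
Your proof is correct and follows exactly the route the paper intends (the paper states only that the result ``can be proved similarly to Theorem~\ref{thr:integrability}''): bound $C_n$ by the zonotope $[0,\xi_1]+\cdots+[0,\xi_n]$, expand its intrinsic volumes via the McMullen--Matheron--Weil formula, and combine $D_j\leq 1$ with independence of the $\|\xi_k\|$. Your explicit remark that $D_j$ vanishes on tuples with repeated indices, so that only products of $j$ independent factors survive and no moment higher than the $p$th is ever needed, is a welcome sharpening of the corresponding step in the paper's proof of Theorem~\ref{thr:integrability}, where this reduction to distinct indices is left implicit.
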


\section{Expected intrinsic volumes}
\label{sec:expect-intr-volum}

Let $X(t)$, $t\geq0$, be a \emph{symmetric $\alpha$-stable} L\'{e}vy process
in $\R^{d}$. In the Gaussian case all moments of $V_j(Z)$,
$j=0,\dots,d$, exist. If $\alpha<2$, then its L\'{e}vy measure is
$\nu(dx)=c\| x \|^{-d-\alpha}$ for a constant $c>0$, so that $\E
V_j(Z)^p<\infty$ for each $p\in[0,\alpha)$.  In this section we
calculate the expected intrinsic volumes of $Z$ assuming that
$\alpha>1$.

Recall that the characteristic function of $X(t)$ can be represented
as 
\begin{equation}
  \label{eq:k-def}
  \E \exp\{\imath\langle X(t),u\rangle\}=\exp\left\{-th(K,u)^{\alpha}\right\},
  \qquad u\in\R^{d},t\geq0,
\end{equation}
where $\langle X(t),u\rangle$ is the scalar product and 
\begin{displaymath}
  h(K,u)=\sup\{\langle x,u \rangle :x\in K  \}, \qquad u\in\R^{d},
\end{displaymath}
is the support function of a convex body $K$ called the
\emph{associated zonoid} of $X(1)$, see
\cite{Kampf2012,Molchanov2009}. Zonoids are convex bodies that are
obtained as limits (with respect to the Hausdorff metric)
of zonotopes, i.e. Minkowski sums of segments. 

Recall that a random compact set $Y$
in $\R^d$ is said to be integrably bounded if $\|Y\|=\sup\{\|u\| :u\in
Y\}$ is integrable. Its \emph{selection expectation} $\E Y$ is
defined as the convex body with support function $\E h(Y,u)$,
$u\in\R^d$, see \cite[Sec.~2.1]{Molchanov2005}. Each zonoid $K$ can be
obtained as the selection expectation of the segment $[0,\xi]$ with a
suitably chosen $\xi$. 

We start by proving an auxiliary result on the expected $j$-dimensional 
volume of a parallelepiped spanned by random vectors $\xi_1,\dots,\xi_j\in\R^d$. 

\begin{theorem}
  \label{thr:parallelepiped}
  Let $j\in\{1,\dots,d\}$. If $\xi_1,\dots,\xi_j\in\R^d$ are 
  independent integrable random vectors, then
  \begin{displaymath}
    V(\E[0,\xi_1],\dots,\E[0,\xi_j],B^d[d-j])=
    \frac{(d-j)!}{d!}\kappa_{d-j} \E D_j(\xi_1,\dots,\xi_j).
  \end{displaymath}   
\end{theorem}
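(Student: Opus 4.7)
The plan is to combine a deterministic mixed-volume identity for a list of segments with a commutation of the selection expectation and the mixed volume, the latter resting on the Minkowski multilinearity of the mixed volume.

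Step one is the deterministic identity
\[V([0,u_1],\dots,[0,u_j],B^d[d-j])=\frac{(d-j)!}{d!}\kappa_{d-j}D_j(u_1,\dots,u_j),\qquad u_1,\dots,u_j\in\R^d.\]
To derive it, I would apply the McMullen--Matheron--Weil formula used in the proof of Theorem~\ref{thr:integrability} to the zonotope $[0,u_1]+\cdots+[0,u_j]$ (so $n=j$): the right-hand side collapses to the single term $\frac{\kappa_{d-j}}{\binom{d}{j}}D_j(u_1,\dots,u_j)$, whereas the left-hand side, expanded multilinearly, equals $j!\,V([0,u_1],\dots,[0,u_j],B^d[d-j])$, because any mixed volume in which one segment appears twice is forced to vanish by collinearity. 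Using $j!\binom{d}{j}=d!/(d-j)!$ gives the claim.

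Step two is the single-argument interchange
\[V(\E[0,\eta],K_2,\dots,K_j,B^d[d-j])=\E\,V([0,\eta],K_2,\dots,K_j,B^d[d-j])\]
for one integrable random vector $\eta\in\R^d$ and deterministic convex bodies $K_2,\dots,K_j$. For simple $\eta$ taking values $v_1,\dots,v_N$ with probabilities $p_1,\dots,p_N$, the selection expectation is the Minkowski combination $\E[0,\eta]=\sum_i p_i[0,v_i]$, and Minkowski linearity of the mixed volume in its first argument yields the identity at once. For general integrable $\eta$, I would approximate by simple $\eta^{(n)}\to\eta$ in $L^1$: the pointwise-in-$u$ bound $|h(\E[0,\eta^{(n)}],u)-h(\E[0,\eta],u)|\leq\|u\|\,\E\|\eta^{(n)}-\eta\|$ forces $\E[0,\eta^{(n)}]\to\E[0,\eta]$ in the Hausdorff metric, so continuity of mixed volumes handles the left-hand side, while a deterministic bound of the form $V([0,\eta],K_2,\dots,K_j,B^d[d-j])\leq c\|\eta\|$ (with $c$ depending on $K_2,\dots,K_j$) supplies dominated convergence on the right-hand side.

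Step three is to iterate step two through the arguments $\xi_1,\dots,\xi_j$. Independence ensures that when I condition on all but one variable, the conditional law of $\xi_k$ is still its marginal, so $\E[0,\xi_k]$ remains the correct inner expectation at each iteration and the outer expectations combine by Fubini. Substituting the identity from step one into the resulting right-hand side yields
\[V(\E[0,\xi_1],\dots,\E[0,\xi_j],B^d[d-j])=\frac{(d-j)!}{d!}\kappa_{d-j}\E D_j(\xi_1,\dots,\xi_j).\]
The main obstacle is step two: passing from simple to general $\eta$ requires both $L^1$-continuity of the selection expectation in the Hausdorff metric and an $L^1$-domination of the mixed volume as a function of $\eta$. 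Once step two is in hand, step one is a bookkeeping identity and step three is a routine induction using independence.
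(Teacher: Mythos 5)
Your argument is correct, but it takes a genuinely different route from the paper's. The paper identifies each $\E[-\xi_k,\xi_k]$ as a zonoid whose generating measure on $\SS^{d-1}$ is $\rho_k(A)=\E[\mathbf{1}_{\xi_k/\|\xi_k\|\in A}\|\xi_k\|]$, and then applies Schneider's integral formula for mixed volumes of zonoids \cite[Th.~5.3.2]{Schneider2014} in one step: it returns $\frac{(d-j)!}{d!}\kappa_{d-j}$ times the iterated integral of $D_j$ against $\rho_1,\dots,\rho_j$, whose product structure is exactly where independence enters, and which is then read off as $\E D_j(\xi_1,\dots,\xi_j)$. You instead reprove the special case of that formula that is actually needed: your step one is the single-zonotope instance (correctly obtained from the McMullen--Matheron--Weil expansion, with the repeated-index terms killed because a mixed volume with two parallel segments vanishes), and your steps two and three replace the generating-measure calculus by Minkowski multilinearity of the mixed volume, a simple-function approximation, and Fubini. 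The approximation details you flag as the main obstacle do go through: the selection expectation is $L^1$-continuous in the Hausdorff metric via the support-function bound you state, and monotonicity plus $[0,w]\subseteq[0,w']+\|w-w'\|B^d$ gives both the domination $V([0,w],K_2,\dots,K_j,B^d[d-j])\leq\|w\|\,V(B^d,K_2,\dots,K_j,B^d[d-j])$ and a Lipschitz estimate in $w$, so the limit on the right-hand side is justified. The trade-off is the expected one: the paper's proof is shorter because the approximation work is delegated to a citable theorem, while yours is more self-contained and makes explicit exactly where integrability and independence are used.
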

\begin{proof}
  For all $k=1,\dots,j$, and $u\in\R^d$,
  \begin{displaymath}
    h(\E[-\xi_k,\xi_k],u)=\E|\langle \xi_k,u\rangle |=\E\left[\left|\left\langle 
    \xi_k/\|\xi_k\|,u \right\rangle \right| \|\xi_k \| 
    1_{\|\xi_k\|\neq 0}\right].
  \end{displaymath}
  We define the measure $\rho_k$ on Borel sets $A$ in the unit sphere
  $\SS^{d-1}$ by letting
  \begin{displaymath}
    \rho_k(A)=\E\left[\mathbf{1}_{\xi_k/\|\xi_k\|\in
        A}\|\xi_k\|\right].
  \end{displaymath}
  % with density
  % \begin{displaymath}
  %   \frac{d\rho_k}{d\P_k}=\|\xi_k\|,
  % \end{displaymath}
  % where $\P_k$ is the probability measure of $\xi_k$. 
  % As a consequence, we obtain
  Then
  \begin{displaymath}
    h(\E[-\xi_k,\xi_k],u)=\int_{\SS^{d-1}}|\langle v,u\rangle |d\rho_k(v), 
    \qquad u\in\R^d,
  \end{displaymath}
  meaning that $\rho_k$ is the generating measure of
  the zonoid $\E[-\xi_k,\xi_k]$, see \cite[Th.~3.5.3]{Schneider2014}.
  % The set $B^d$ is a zonoid with generating measure
  % $\sigma_{d-1}/(2\kappa_{d-1})$, where $\sigma_{d-1}$ is the
  % spherical Lebesgue measure on $\SS^{d-1}$. 
  Note that
  \begin{displaymath}
     V(\E[0,\xi_1],\dots,\E[0,\xi_j],B^d[d-j])=2^{-j}
     V(\E[-\xi_1,\xi_1],\dots,\E[-\xi_j,\xi_j],B^d[d-j]).
  \end{displaymath}
  By \cite[Th.~5.3.2]{Schneider2014}, 
  \begin{multline*}
    V(\E[0,\xi_1],\dots,\E[0,\xi_j],B^d[d-j])\\=
    \frac{(d-j)!}{d!}\kappa_{d-j}\int_{\SS^{d-1}}\cdots \int_{\SS^{d-1}}
    D_j(u_1,\dots,u_j) d\rho_1(u_1)\cdots d\rho_j(u_j).
  \end{multline*}
  It remains to identify the integral as $\E D_j(\xi_1,\dots,\xi_j)$.
\end{proof}
% We use the fact that
%   \begin{displaymath}
%     D_{k+1}(u_1,\dots,u_{k+1})=D_k(u_1,\dots,u_k)\sin\varphi,
%   \end{displaymath}
%   where $\varphi$ denotes the angle between $u_{k+1}$ and 
%   span$\{u_1,\dots,u_k\}$. Then, the repeated application of 
%   \cite[Lemma~?5.3.2?]{Schneider2014} yields 
%   \begin{align*}
%     V(\E[0,\xi_1],\dots,\E[0,\xi_j],B^d[d-j])&=
%     \frac{(d-j)!\kappa_{d-j}}{d!}\int_{\SS^{d-1}}\dots \int_{\SS^{d-1}}
%     D_j(u_1,\dots,u_j) \\
%     &\times d\rho_1(u_1)\cdots d\rho_j(u_j) \\
%     &=\frac{(d-j)!\kappa_{d-j}}{d!} \E D_j(\xi_1,\dots,\xi_j).
%   \end{align*}
% \end{proof}

Let $M_j$ be a $d\times
j$ matrix composed of $j$ columns being i.i.d. copies of a random
vector $\xi\in\R^d$.

\begin{corollary}
  \label{cor:determinant}
  If $\xi\in\R^d$ is an integrable random vector, then
  \begin{displaymath}
    V_j(\E[0,\xi])=\frac{1}{j!}\E\sqrt{\det M_j^T M_j}, \qquad j=1,\dots,d.
  \end{displaymath}   
\end{corollary}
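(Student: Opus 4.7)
The plan is to deduce the corollary directly from Theorem~\ref{thr:parallelepiped} by specializing the vectors $\xi_1,\dots,\xi_j$ to be i.i.d.\ copies of $\xi$ (exactly the columns of $M_j$) and then rewriting the mixed volume on the left using the definition \eqref{eq:iv} of the intrinsic volume.

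First, I would apply \eqref{eq:iv} to the zonoid $K=\E[0,\xi]$ to get
\begin{displaymath}
  V_j(\E[0,\xi])=\frac{\binom{d}{j}}{\kappa_{d-j}}
  V(\E[0,\xi][j],B^d[d-j]).
\end{displaymath}
Next, letting $\xi_1,\dots,\xi_j$ be i.i.d.\ copies of $\xi$, Theorem~\ref{thr:parallelepiped} gives
\begin{displaymath}
  V(\E[0,\xi][j],B^d[d-j])
  =\frac{(d-j)!}{d!}\kappa_{d-j}\,\E D_j(\xi_1,\dots,\xi_j).
\end{displaymath}
Combining these, the constants telescope since $\binom{d}{j}(d-j)!/d!=1/j!$, and the factor $\kappa_{d-j}$ cancels, leaving
\begin{displaymath}
  V_j(\E[0,\xi])=\frac{1}{j!}\E D_j(\xi_1,\dots,\xi_j).
\end{displaymath}

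The only remaining point is to identify $D_j(\xi_1,\dots,\xi_j)$, the $j$-dimensional volume of the parallelepiped spanned by the columns of $M_j$, with $\sqrt{\det M_j^TM_j}$. This is the standard Gram determinant formula: the $(i,k)$-entry of $M_j^TM_j$ is $\langle\xi_i,\xi_k\rangle$, and its determinant equals the squared $j$-content of the parallelepiped spanned by $\xi_1,\dots,\xi_j$ in $\R^d$. Substituting this in yields the claimed identity.

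There is essentially no obstacle here beyond bookkeeping; the content of the corollary is entirely in Theorem~\ref{thr:parallelepiped}, and the step most worth double-checking is the cancellation $\binom{d}{j}(d-j)!\kappa_{d-j}/(d!\kappa_{d-j})=1/j!$ together with the translation from $D_j$ to the Gram determinant.
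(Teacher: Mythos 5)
Your proposal is correct and follows exactly the paper's own route: apply \eqref{eq:iv} together with Theorem~\ref{thr:parallelepiped} to i.i.d.\ copies of $\xi$, check the constant cancellation $\binom{d}{j}(d-j)!/d!=1/j!$, and identify $D_j(\xi_1,\dots,\xi_j)$ with the Gram determinant $\sqrt{\det M_j^TM_j}$. Nothing is missing.
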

\begin{proof}
  In view of \eqref{eq:iv},
  \begin{displaymath}
    V_j(\E[0,\xi])=\frac{1}{j!}\E D_j(\xi_1,\dots,\xi_j), \qquad j=1,\dots,d,
  \end{displaymath}
  where $\xi_1,\dots,\xi_j$ are i.i.d. copies of the random vector
  $\xi\in\R^d$. It remains to note that
  \begin{displaymath}
    \E D_j(\xi_1,\dots,\xi_j)=\E\sqrt{\det M_j^T M_j}. \qedhere
  \end{displaymath}
\end{proof}

\begin{theorem}
  \label{thr:intrinsic volumes}
  Let $X(t)$, $t \geq0$, be a symmetric $\alpha$-stable L\'{e}vy process 
  in $\R^d$ with $\alpha >1$. Then, for all $j=1,\dots,d$,
  \begin{displaymath}
    \E V_j(Z)=\frac{\Gamma(1-1/\alpha)^j \Gamma(1/\alpha)^j}
   {\pi^j \Gamma(j/\alpha+1)}V_j(K),
  \end{displaymath}
  where $K$ is the associated zonoid of $X(1)$.
\end{theorem}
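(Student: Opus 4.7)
The plan is to approximate $Z$ by the polytope $C_n = \conv\{X(k/n) : 0 \le k \le n\}$, compute $\E V_j(C_n)$ using Theorem~\ref{thr:parallelepiped} and Corollary~\ref{cor:determinant}, and pass to the limit. Right-continuity of the L\'evy path gives $C_n \to Z$ in the Hausdorff metric almost surely. Theorem~\ref{thr:integrability} with $p = 1 < \alpha$, combined with the bound $V_j(C_n) \le V_j(Z)$, permits dominated convergence and yields $\E V_j(C_n) \to \E V_j(Z)$. By stationary independent increments and the self-similarity $X(t) \stackrel{d}{=} t^{1/\alpha} X(1)$, $C_n$ is distributed as $n^{-1/\alpha}\widetilde C_n$, where $\widetilde C_n = \conv\{0, \widetilde S_1, \ldots, \widetilde S_n\}$ is the convex hull of a random walk with i.i.d.\ steps $\widetilde\xi_k \sim X(1)$ and partial sums $\widetilde S_k$. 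Hence it suffices to evaluate $\lim_{n\to\infty} n^{-j/\alpha}\, \E V_j(\widetilde C_n)$.

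I would next convert the target identity into a statement about expected parallelepipeds via Corollary~\ref{cor:determinant}. For symmetric $\alpha$-stable $X(1)$ with $\alpha > 1$, a direct Fourier computation (using the reflection formula for $\Gamma$) gives $\E|\langle X(1), u\rangle| = (2\Gamma(1-1/\alpha)/\pi)\, h(K,u)$ for every $u \in \R^d$, and hence $\E[0,X(1)] = (\Gamma(1-1/\alpha)/\pi)\,K$ as a selection expectation. Applying Corollary~\ref{cor:determinant} with $\xi = X(1)$ yields
\[
V_j(K) \;=\; \frac{1}{j!}\left(\frac{\pi}{\Gamma(1-1/\alpha)}\right)^{\!j}\, \E D_j(X_1,\ldots,X_j),
\]
where $X_1,\ldots,X_j$ are i.i.d.\ copies of $X(1)$. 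The theorem is therefore equivalent to
\[
\E V_j(Z) \;=\; \frac{\Gamma(1/\alpha)^j}{j!\,\Gamma(j/\alpha+1)}\,\E D_j(X_1,\ldots,X_j).
\]

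To establish this identity from the discretisation, one expresses $V_j(\widetilde C_n)$ via a simplicial-type decomposition as a sum of parallelepiped volumes $D_j$ spanned by $j$-subsets of walk vertices (equivalently, of consecutive increments), and applies Theorem~\ref{thr:parallelepiped} to each expected contribution. Independence and symmetry of the steps $\widetilde\xi_k$ reduce each term to $\E D_j(X_1,\ldots,X_j)$. After rescaling by $n^{-j/\alpha}$, the surviving combinatorial weights should converge by a Riemann-sum argument to the Dirichlet integral (with the convention $t_0=0$)
\[
\int_{\{0<t_1<\cdots<t_j<1\}} \prod_{k=1}^j (t_k-t_{k-1})^{1/\alpha-1}\,dt_1\cdots dt_j \;=\; \frac{\Gamma(1/\alpha)^j}{\Gamma(j/\alpha+1)},
\]
the density $\prod_k(t_k-t_{k-1})^{1/\alpha-1}$ being the footprint of the $t^{1/\alpha}$ self-similarity acting on the ordered time gaps. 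The main obstacle is precisely this combinatorial/asymptotic step: identifying the correct simplicial decomposition of the walk's convex hull into parallelepipeds (exploiting the symmetry of $X(1)$ to eliminate ordering dependence) and recognising the limit as the Dirichlet integral with the particular $1/\alpha-1$ exponent.
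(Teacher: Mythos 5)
Your overall architecture coincides with the paper's: discretise via $C_n=\conv\{X(k/n)\}$, use self-similarity to pull out $(i_1\cdots i_j)^{1/\alpha}n^{-j/\alpha}$, identify $\E[0,X(1)]=\pi^{-1}\Gamma(1-1/\alpha)K$ through Corollary~\ref{cor:determinant}, and recognise the limiting combinatorial sum as the Dirichlet integral $\Gamma(1/\alpha)^j/\Gamma(j/\alpha+1)$. Your treatment of the convergence $\E V_j(C_n)\to\E V_j(Z)$ (Hausdorff convergence from right-continuity, domination by $V_j(Z)$, integrability from Theorem~\ref{thr:integrability}) is in fact spelled out more carefully than in the paper, and your Fourier computation of $\E[0,X(1)]$ is correct.

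However, there is a genuine gap at exactly the point you flag as ``the main obstacle'', and it is not a routine one. The step you leave open --- expressing $V_j(\widetilde C_n)$ as a sum of parallelepiped volumes over $j$-subsets of walk vertices and reducing each expectation to $\E D_j(X_1,\dots,X_j)$ by ``independence and symmetry'' --- does not work as described. First, there is no elementary simplicial decomposition of the $j$-th intrinsic volume of a polytope into determinants $D_j$ over vertex subsets. Second, even granting such a decomposition, the positions $\widetilde S_{k_1},\dots,\widetilde S_{k_j}$ of a single walk are strongly dependent, so the expected contribution of a given subset is not $\E D_j(X_1,\dots,X_j)$; and symmetry of the step distribution does not remove this dependence. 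What the paper uses instead is the exact formula of Vysotsky and Zaporozhets \cite{Vysotsky2015},
\begin{displaymath}
  \E V_j(C_n)=\frac{1}{j!}\sum_{\substack{i_1+\dots+i_j\le n\\ i_1,\dots,i_j\ge1}}
  \frac{1}{i_1\cdots i_j}\,
  \E\sqrt{\det\bigl(\langle S^{(m)}_{i_m},S^{(l)}_{i_l}\rangle\bigr)_{m,l=1}^j},
\end{displaymath}
in which the Gram determinant is built from \emph{independent copies} $S^{(1)},\dots,S^{(j)}$ of the walk and the harmonic weights $1/(i_1\cdots i_j)$ appear. This identity rests on a nontrivial combinatorial analysis of the faces of the convex hull of an exchangeable-increment walk (a multidimensional extension of the Spitzer--Widom formula); it is precisely what produces, after the $(i_1\cdots i_j)^{1/\alpha}$ scaling, the exponents $1/\alpha-1$ and hence your Dirichlet integral. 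Without this input (or an equivalent derivation), the proof is incomplete: you have correctly guessed the shape of the answer but not supplied the mechanism that yields the weights $(i_1\cdots i_j)^{1/\alpha-1}$ and the decoupling into independent copies.
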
  
\begin{proof}
  The main idea is to approximate the L\'{e}vy process with the random
  walk $S_i=X(i/n)$, $i=0,\dots,n$. Denote by $C_n$ the convex hull of
  $S_0,S_1,\dots,S_n$. It is shown in \cite{Vysotsky2015} that
  \begin{displaymath}
    \E V_j(C_n)=\frac{1}{j!}\sum_
    {\substack{i_1+ \dots + i_j \leq n \\ i_1,\dots ,i_j \geq 1}}
    \frac{1}{i_1 \cdots i_j}
    \E \sqrt{\left| \det\left(\langle S_{i_m}^{(m)}, S_{i_l}^{(l)}
    \rangle\right)_{m,l=1}^j \right|},
    \qquad j=1,\dots,d,
  \end{displaymath}  
  where $S_n^{(1)}, \dots , S_n^{(d)}$ are i.i.d.  random walks that
  arise from i.i.d. copies $X^{(1)},\dots,X^{(d)}$ of the L\'evy
  process. The determinant is known as the Gram determinant and is
  always non-negative, 
  so that the absolute value can be omitted.

  Since $X(t)$ coincides
  in distribution with $t^{1/\alpha}X(1) $ for any $t > 0$,
  \begin{align*}
    \E \sqrt{\det\left(\langle S_{i_m}^{(m)}, S_{i_l}^{(l)}\rangle\right)_{m,l=1}^j}&=
    \E \sqrt{\det\left(\langle X^{(m)}(i_m/n),
        X^{(l)}(i_l/n)\rangle\right)_{m,l=1}^j} \\
    &=\E \sqrt{\det\left(\langle \left(i_m/n\right)^{1/\alpha} X^{(m)}(1), 
    \left(i_l/n\right)^{1/\alpha} X^{(l)}(1)\rangle\right)_{m,l=1}^j} \\
    &=\frac{(i_1 \cdots i_j)^{1/\alpha}}{n^{j/\alpha}} 
    \E \sqrt{\det\left(\langle X^{(m)}(1), X^{(l)}(1)\rangle\right)_{m,l=1}^j} \\
    &=\frac{(i_1 \cdots i_j)^{1/\alpha}}{n^{j/\alpha}} 
    j! V_j(\E[0,X(1)]),
  \end{align*}  
  where the last equality follows from Corollary~\ref{cor:determinant}. 
  It is shown in \cite{Molchanov2009} that
  \begin{displaymath}
    \E[0,X(1)]=\frac{1}{\pi}\Gamma\left(1-\frac{1}{\alpha}\right)K,
  \end{displaymath}
  where $K$ is the associated zonoid of $X(1)$. 
  Thus, 
  \begin{align*}
    \E V_j(C_n)=n^{-j/\alpha}V_j\Big(\frac{1}{\pi}\Gamma
      \Big(1-\frac{1}{\alpha}\Big)K\Big)
    \sum_{\substack{i_1+ \dots + i_j \leq n \\ i_1,\dots ,i_j \geq 1}}
    \left(i_1 \cdots i_j\right)^{1/\alpha-1}\to \E V_j(Z)\quad
    \text{as}\; n\to\infty.
  \end{align*}
  It follows from the Stolz-Ces\`{a}ro theorem
  \cite[Th.~1.22]{Muresan2009} that 
  \begin{multline*}
    \lim_{n\to\infty}
     n^{-j/\alpha} \sum_{\substack{i_1+ \dots + i_j \leq n 
     \\ i_1,\dots ,i_j \geq 1}} \left(i_1 \cdots
   i_j\right)^{1/\alpha-1}\\ 
    = ((n+1)^{j/\alpha} - n^{j/\alpha})^{-1}
    \left( \sum_{\substack{i_1+ \dots + i_j \leq n+1 
    \\ i_1,\dots ,i_j \geq 1}} \left(i_1 \cdots i_j\right)^{1/\alpha-1}
    -\sum_{\substack{i_1+ \dots + i_j \leq n 
    \\ i_1,\dots ,i_j \geq 1}} \left(i_1 \cdots i_j\right)^{1/\alpha-1}\right).
  \end{multline*}
  Since
  \begin{displaymath}
    \lim_{n\to\infty}\frac{(n+1)^x-n^x}{xn^{x-1}}=1
  \end{displaymath}
  for all $x>0$, it remains to find the limit of 
  \begin{align*}
    \frac{\alpha}{j} n^{-j/\alpha+1} \sum_{\substack{i_1+ \dots + i_j = n+1 
    \\ i_1,\dots ,i_j \geq 1}}
    \left(i_1 \cdots i_j\right)^{1/\alpha-1} 
    &= \frac{\alpha}{j} n^{-j/\alpha+1} \sum_{\substack{i_1+ \dots + i_j = n+1 
    \\ i_1,\dots ,i_j \geq 1}}
    \left(\frac{i_1 \cdots i_j}{n^j}\right)^{1/\alpha-1} n^{j/\alpha-j} 
    \left(\frac{n}{n}\right)^{j-1} \\
    &=\frac{\alpha}{j} \sum_{\substack{i_1+ \dots + i_j = n+1 
    \\ i_1,\dots ,i_j \geq 1}}
    \left(\frac{i_1 \cdots i_j}{n^j}\right)^{1/\alpha-1}
    \left(\frac{1}{n}\right)^{j-1}.
  \end{align*}
  As $n\to\infty$, the limit can be written using the multinomial Beta
  function related to the Dirichlet distribution, see
  \cite[p.~11]{Feng2010}, as
  \begin{displaymath}
    \frac{\alpha}{j}\int_{\substack{t_1+ \dots + t_j = 1 
    \\ t_1,\dots ,t_j \geq 0}}(t_1 \cdots t_{j})^{1/\alpha-1}
    dt_1\dots dt_j=\frac{\alpha \Gamma(1/\alpha)^j}{j \Gamma(j/\alpha)}
    =\frac{\Gamma(1/\alpha)^j}{\Gamma(j/\alpha+1)}.
  \end{displaymath} 
  Finally, 
  \begin{align*}
    \E V_j(Z)&=\frac{\Gamma(1/\alpha)^j}
    {\Gamma(j/\alpha+1)} V_j\left(\frac{1}{\pi}
    \Gamma\left(1-\frac{1}{\alpha}\right)K\right)
  \end{align*}
  and the result follows from the homogeneity of the intrinsic
  volumes. 
\end{proof}  

\begin{remark}
  By the self-similarity property, $Z_s$ coincides in distribution
  with $s^{1/\alpha}Z$ for $s>0$, whence $\E V_j(Z_s)=s^{j/\alpha}\E
  V_j(Z)$.
\end{remark}

\begin{example}
  If $X$ is the standard Brownian motion, then $\alpha=2$ and
  $K=\frac{1}{\sqrt{2}}B^d$, so that we recover the result of
  \cite{Eldan2014} 
  \begin{displaymath}
    \E V_j(Z)=\binom{d}{j}\left(\frac{\pi}{2}\right)^{j/2}
    \frac{\Gamma((d-j)/2+1)}{\Gamma(j/2+1)\Gamma(d/2+1)},\qquad j=1,\dots,d.
  \end{displaymath}
\end{example}

\begin{example}
  \label{ex:spherically-symmetric}
  If $X(1)$ is spherically symmetric, then
  \begin{displaymath}
%    \label{eq:char-function}
    \E \exp\{\imath\langle X(1),u\rangle\}=\exp\left\{-c\|u\|^\alpha \right\}
  \end{displaymath}
  for $c>0$, see  \cite[Th.~14.14]{Sato2005}. Then $K=c^{1/\alpha}B^d$,
  so that
  \begin{displaymath}
    \E V_j(Z)=\binom{d}{j}\frac{\kappa_d}{\kappa_{d-j}} 
    \frac{\Gamma(1-1/\alpha)^j \Gamma(1/\alpha)^j}
   {\pi^j \Gamma(j/\alpha+1)}c^{j/\alpha},\qquad j=1,\dots,d.
  \end{displaymath}
\end{example}

\section{Interior of the convex hull}
\label{sec:interior-convex-hull}

It is well known that the convex hull of the Brownian motion in 
$\R^d$ contains the origin as interior
point with probability 1 for each $s>0$, see \cite{Evans1985}. We
extend this result for symmetric L\'evy processes.

\begin{theorem}
  \label{thr:interior-point}
  Let $X(t)$, $t\geq0$, be a symmetric L\'{e}vy process in $\R^d$,
  such that $\langle X(1),u\rangle$ has a non-atomic distribution for
  each $u\neq0$. Then $\P\{0\in \Int Z_s\}=1$ and $\P\{X(s)\in \Int
  Z_s\}=1$ for each $s>0$.
\end{theorem}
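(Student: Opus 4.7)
My plan is to prove the first assertion, $\P\{0 \in \Int Z_s\} = 1$, and deduce the second from it by time reversal. Define $\tilde X(t) = X(s) - X(s-t)$ for $t \in [0,s]$. By stationarity and independence of the increments of $X$ together with the symmetry $X(b)-X(a)\stackrel{d}{=}X(a)-X(b)$, $\tilde X$ is a L\'evy process with the same law as $X$. Its trajectory satisfies $\conv\tilde X([0,s]) = X(s) - Z_s$, so $X(s)\in\Int Z_s$ if and only if $0 \in \Int\conv\tilde X([0,s])$, and applying the first assertion to $\tilde X$ gives the second.

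For the first assertion, $0 \in \Int Z_s$ iff the support function $h(Z_s,u)=\sup_{t \in [0,s]}\langle X(t),u\rangle$ is strictly positive for every $u\in\SS^{d-1}$. For each fixed $u$, the projection $Y_u(t)=\langle X(t),u\rangle$ is a symmetric one-dimensional L\'evy process whose distribution at time $1$ is non-atomic; this excludes $Y_u$ being compound Poisson (whose distribution would carry an atom at $0$ at every fixed time). Classical one-dimensional L\'evy theory then yields that $0$ is regular for $(0,\infty)$ (and, by symmetry, also for $(-\infty,0)$), so $\sup_{t\in[0,s]}Y_u(t)>0$ almost surely.

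To convert the directionwise statement into one holding for all $u\in\SS^{d-1}$ simultaneously almost surely, I would invoke Blumenthal's 0-1 law. Set $\tau = \inf\{t > 0 : 0 \in \Int Z_t\}$. Since $Z_t \subseteq Z_s$ for $t \leq s$, on the event $\{\tau=0\}$ the origin lies in $\Int Z_s$ for every $s > 0$. The event $\{\tau=0\} = \bigcap_n \{\tau < 1/n\}$ lies in the germ $\sigma$-algebra $\mathcal{F}_{0+}$, which is trivial for the natural filtration of a L\'evy process; hence $\P\{\tau=0\}\in\{0,1\}$, and it suffices to rule out the degenerate alternative.

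The main obstacle is excluding $\P\{\tau=0\}=0$, i.e., showing that with positive probability the origin enters the interior of $Z_t$ on arbitrarily short time intervals. For any $\epsilon > 0$, I would try to exhibit a positive-probability event on which there exist times $0 < t_1 < \cdots < t_{d+1} < \epsilon$ with $0$ in the interior of $\conv\{X(t_1),\ldots,X(t_{d+1})\}$. The symmetry of $X$ together with the non-atomic projection hypothesis (which yields regularity of $0$ for both half-lines in every direction) should make such a configuration accessible, but executing it uniformly over directions is delicate. An alternative is to approximate $X$ on $[0,\epsilon]$ by a symmetric random walk with many small steps and invoke a Wendel-type estimate, which shows that the origin lies in the interior of the convex hull of a symmetric random walk with probability tending to $1$ as the number of steps grows.
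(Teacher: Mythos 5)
Your reduction of the second claim to the first by time reversal is the same as the paper's (the paper uses $\tilde X(t)=X((s-t)-)-X(s)$, with the left limit to stay within the c\`adl\`ag framework, and the duality lemma giving $\tilde X\stackrel{d}{=}-X$; your version is essentially identical). The problem is that the first claim, $\P\{0\in\Int Z_s\}=1$, which is the entire substance of the theorem, is not actually proved: you correctly observe that the fixed-direction statement $\sup_{t\le s}\langle X(t),u\rangle>0$ a.s.\ does not yield the uniform statement over $u\in\SS^{d-1}$, you correctly identify this as the main obstacle, and then you stop at ``I would try to exhibit\dots'' and ``an alternative is to\dots invoke a Wendel-type estimate.'' Neither route is executed. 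The Blumenthal $0$--$1$ law detour does not help here: it reduces the problem to showing $\P\{\tau=0\}>0$, which is exactly as hard as the original uniform-in-$u$ problem, and it is also unnecessary, since one can work directly on $[0,s]$.

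The second route you gesture at is in fact the paper's proof, and seeing what it requires shows why the gap is substantive. The paper approximates $Z_s$ by $C_n=\conv\{X(is/n):i=0,\dots,n\}$, notes that the events $\{0\in\partial C_n\}$ decrease to $\{0\in\partial Z_s\}$, and bounds $\P\{0\in\partial C_n\}$ by the expected number $\E Y_n$ of faces of $C_n$ having the origin as a vertex. The needed estimate is not the classical Wendel theorem (which is for i.i.d.\ symmetric points, not partial sums of a walk) but the distribution-free formula of Vysotsky and Zaporozhets for symmetrically exchangeable increments in general position, which gives $\E Y_n\sim 2(\log n)^{d-1}/\sqrt{\pi n}\to0$. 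Its hypotheses must be verified: symmetry of the increments, and the general-position condition $\P\{X(t)\in H\}=0$ for every affine hyperplane $H$ and $t>0$ --- this is precisely where the assumption that $\langle X(1),u\rangle$ is non-atomic enters, and your proposal never connects the hypothesis to this condition. Asserting that ``a Wendel-type estimate shows the origin lies in the interior with probability tending to $1$'' assumes exactly the quantitative input that constitutes the proof, so as written the argument has a genuine hole at its center.
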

\begin{proof}
  Note that 
  \begin{displaymath}
    \P\{0\in \Int Z_s\}=1-\P\{0\in \partial Z_s\},
  \end{displaymath}
  where $\partial Z_s$ is the topological boundary of $Z_s$.  We
  approximate $Z_s$ with the convex hull
  $C_n=\conv\{X(is/n),i=0,\dots,n\}$ of the random walk embedded in
  the process.  Observe that the sequence of events $\{0\in \partial
  C_n \}$ is decreasing to $\{0\in\partial Z_s\}$.  Denote by $Y_n$
  the number of faces of $C_n$ that contain the origin as a
  vertex. Since $X(t)$ is symmetric and, in view of the imposed
  condition, $\P\{X(t) \in H\}=0$ for any affine hyperplane $H$ in
  $\R^d$ and $t>0$, \cite[Eq.~(14),(15)]{Vysotsky2015} yield that
  \begin{displaymath}
    \E Y_n=2 \sum_{1\leq i_2<\dots<i_d\leq n}
    \frac{(2n-2i_d-1)!!}{i_2(2n-2i_d)!!}\prod_{k=2}^{d-1}
    \frac{1}{i_{k+1}-i_k} \quad \sim \quad
    \frac{2(\log n)^{d-1}}{\sqrt{\pi n}} \quad \text{as}\; n\to\infty.
  \end{displaymath}
  Thus, $\E Y_n \to 0$ as $n\to\infty$, so that 
  \begin{displaymath}
    \P\{0\in \partial Z_s\}=\lim_{n\to\infty}
    \P\{0\in \partial C_n\}=0.\qquad 
  \end{displaymath}
  
  Now we prove the second statement and make the convention that
  $X(0-)=0$. The time reversal of $X(t)$, $t\in[0,s]$, is defined as
  \begin{displaymath}
    \tilde{X}(t)=X((s-t)-)-X(s), \qquad t\in[0,s].
  \end{displaymath}
  It is well known that $\tilde{X}(t)$ coincides in distribution with
  $-X(t)$, see \cite[Sec.~II.1]{Bertoin1996}.  By symmetry of the
  process and the fact that the origin almost surely belongs to the
  interior of the convex hull,  
  \begin{displaymath}
    0\in \Int\conv\{X((s-t)-)-X(s) , \ t\in[0,s]\} \quad \text{a.s.}
  \end{displaymath}
  This relation is equivalent to
  \begin{displaymath}
    X(s)\in \Int\conv\{X((s-t)-), \ t\in[0,s]\}\subseteq 
    \Int Z_s \quad \text{a.s.} \qquad \qedhere
  \end{displaymath}
\end{proof}

% The following result says that for each fixed time $s>0$, 
% $X(s)$ is contained in the interior of $Z_s$.

% \begin{theorem}
%   \label{th:interior-point2}
%   Let $X(t)$, $t\geq0$, be a symmetric L\'{e}vy process in $\R^d$, 
%   where $X(1)$ has full support. Then $\P\{X(s)\in \Int Z_s\}=1$
%   for each $s>0$.
% \end{theorem}
% \begin{proof}
%   We make the convention that $X(0-)=0$. The time reversal of $X(t)$,
%   $t\in[0,s]$, is defined as
%   \begin{displaymath}
%     \tilde{X}(t)=X((s-t)-)-X(s), \qquad t\in[0,s].
%   \end{displaymath}
%   It is well known that $\tilde{X}(t)$ coincides in distribution with
%   $-X(t)$, see \cite[Sec.~II.1]{Bertoin1996}.  By symmetry of the
%   process and Theorem~\ref{thr:interior-point}, 
%   \begin{displaymath}
%     0\in \Int\conv\{X((s-t)-)-X(s) , \ t\in[0,s]\} \quad \text{a.s.}
%   \end{displaymath}
%   This relation is equivalent to
%   \begin{displaymath}
%     X(s)\in \Int\conv\{X((s-t)-), \ t\in[0,s]\}\subseteq 
%     \Int Z_s \quad \text{a.s.} \qquad \qedhere
%   \end{displaymath}
% \end{proof}

The distribution of $\langle X(1),u\rangle$ is non-atomic for each $u\neq0$
if projections of the L\'evy measure $\nu$ on any one-dimensional
linear subspace of $\R^d$ is infinite outside the origin, see 
\cite[Th.~27.4]{Sato2005}, or if $X(1)$ has a non-trivial full-dimensional 
Gaussian component. 
%Alternatively, if the L\'evy measure is infinite and
%absolutely continuous, then $X(t)$ is absolutely continuous for all
%$t$ and so this condition also suffices. 

\section{$L_{p}$-geometry of the convex hull}
\label{sec:l_p-geometry-convex}

For convex bodies $L$ and $M$ that both contain the origin and
$p\in[1,\infty)$, the $L_p$-sum $ L +_p M$ is defined via its support
function as
\begin{displaymath}
  h(L +_p  M,u)^p= h(L,u)^p+ h(M,u)^p,\qquad u\in\R^d.
\end{displaymath}
If $p=1$, one recovers the Minkowski sum.

Based on \eqref{eq:polynomial}, the mixed volume $V (L[d-1], M[1])$
can be defined as
\begin{displaymath}
  V(L[d-1],M[1])=\frac{1}{d}\lim_{t\downarrow 0}\frac{V_d(L+tM)-V_d(L)}{t}.
\end{displaymath}
The $L_p$-generalisation of this mixed volume is defined by 
\begin{displaymath}
  V_p(L,M)=\frac{p}{d}\lim_{t\downarrow 0}\frac{V_d(L+_pt^{1/p}M)-V_d(L)}{t},
\end{displaymath}
where it is required that $L$ and $M$ are convex bodies with the
origin as interior point, see \cite[Eq.~(9.11)]{Schneider2014}. 

We generalise the expression for $\E V(L[d-1],Z[1])$
from \cite{Kampf2012} for the $L_p$-case and symmetric
$\alpha$-stable L\'evy processes in $\R^d$ with $\alpha\in(1,2]$.
The case $\alpha=2$ is considered separately, since the integrability
condition is different and in this case we obtain an explicit expression.

\begin{theorem}
  \label{thr:mixed-volume1}
  Let $L$ be a convex body in $\R^d$ with the origin as an interior
  point and $X(t)$, $t\geq 0$, be a symmetric $\alpha$-stable L\'{e}vy
  process in $\R^d$ with $\alpha\in(1,2)$, such that $\langle X(t),u \rangle$ is 
  nondegenerate for all $u\neq0$. 
  Then, for all $p\in[1,\alpha)$,
  \begin{displaymath}
    \E V_p(L,Z)=\E(\sup_{0\leq t \leq 1} R(t))^p V_p(L,K),
  \end{displaymath}
  where $R(t)$ is a symmetric $\alpha$-stable L\'{e}vy process in $\R$
  with the same parameter $\alpha$ as $X(t)$ and scale parameter 1,
  and $K$ is the associated zonoid of $X(1)$.
\end{theorem}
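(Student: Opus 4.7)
The plan is to reduce $\E V_p(L,Z)$ to an integral over $\SS^{d-1}$ of $\E h(Z,u)^p$ against the $L_p$-surface area measure of $L$, and then exploit the fact that the one-dimensional process $t\mapsto\langle X(t),u\rangle$ is a rescaled standard symmetric $\alpha$-stable L\'evy process with scale factor $h(K,u)$.

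First, since the origin is an interior point of $L$ and $Z\ni X(0)=0$, the $L_p$-mixed volume admits the integral representation
\[
V_p(L,Z)=\frac{1}{d}\int_{\SS^{d-1}} h(Z,u)^p\,dS_p(L,u),
\]
where $S_p(L,\cdot)$ is the finite $L_p$-surface area measure of $L$ on $\SS^{d-1}$, see \cite[Ch.~9]{Schneider2014}. Taking expectations and applying Tonelli's theorem yields
\[
\E V_p(L,Z)=\frac{1}{d}\int_{\SS^{d-1}}\E h(Z,u)^p\,dS_p(L,u).
\]
For each fixed $u\in\SS^{d-1}$, the scalar-valued process $(\langle X(t),u\rangle)_{t\geq0}$ is, by \eqref{eq:k-def}, a symmetric $\alpha$-stable L\'evy process in $\R$ with scale parameter $h(K,u)$, so it coincides in law with $(h(K,u)R(t))_{t\geq0}$. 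Hence $h(Z,u)=\sup_{0\leq t\leq 1}\langle X(t),u\rangle$ has the same distribution as $h(K,u)\sup_{0\leq t\leq 1}R(t)$, giving
\[
\E h(Z,u)^p=h(K,u)^p\,\E\Bigl(\sup_{0\leq t\leq 1}R(t)\Bigr)^p.
\]
Substituting back, the constant factor pulls out of the integral and the remaining integral equals $d\,V_p(L,K)$, which yields the stated identity.

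The main technical point is to verify that $\E(\sup_{0\leq t\leq 1}R(t))^p<\infty$ for $p\in[1,\alpha)$, which is precisely what justifies Tonelli above. This follows from standard moment bounds on suprema of symmetric $\alpha$-stable L\'evy processes on bounded time intervals, in the spirit of \cite[Thms.~25.3, 25.18]{Sato2005} used in the proof of Theorem~\ref{thr:integrability}; indeed, the argument of that theorem, applied to the one-dimensional projection, shows that $\E(\sup_{0\leq t\leq 1}|R(t)|)^p<\infty$ whenever $p<\alpha$. The non-degeneracy assumption gives $h(K,u)>0$ for all $u\neq0$, and together with Theorem~\ref{thr:interior-point} ensures that $Z$ almost surely contains the origin as an interior point, so the integral representation of $V_p(L,Z)$ is legitimate. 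Finiteness of the outer integral is then automatic because $h(K,\cdot)$ is continuous, hence bounded on the compact sphere $\SS^{d-1}$, and $S_p(L,\cdot)$ is a finite Borel measure.
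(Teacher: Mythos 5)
Your proposal is correct and follows essentially the same route as the paper: the integral representation $V_p(L,M)=\frac{1}{d}\int_{\SS^{d-1}}h(M,u)^p\,S_{p,0}(L,du)$, Fubini/Tonelli, and the identification in law of $\langle X(\cdot),u\rangle$ with $h(K,u)R(\cdot)$ via \eqref{eq:k-def}. Your additional remarks on the finiteness of $\E(\sup_{0\leq t\leq 1}R(t))^p$ for $p<\alpha$ and on the origin being an interior point of $Z$ (needed for the $L_p$-mixed volume to be well defined) are sensible supplements to what the paper states more tersely.
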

\begin{proof}
  It follows from the definition of $\alpha$-stability that the 
  one-dimensional L\'{e}vy process $\langle X(t),u \rangle$ is 
  symmetric $\alpha$-stable. In view of \eqref{eq:k-def}, 
  \begin{displaymath}
    \E \exp\{\imath s \langle X(t),u \rangle\}
    =\exp\{-t|s|^{\alpha}h(K,u)^{\alpha}\}.
  \end{displaymath}
  The finite-dimensional distributions of the process $\langle X(t),u
  \rangle$, $t\geq0$, coincide with those of the process
  $h(K,u)R(t)$, $t\geq0$, and
  \begin{align*}
    \E h(Z,u)^p 
    &=\E(\sup\{\langle X(t),u \rangle : t\in[0,1]\})^p \\
    &=h(K,u)^p \E(\sup_{0\leq t \leq 1} R(t))^p.
  \end{align*}
  The last expectation is finite if $p<\alpha$.  

  It is shown \cite[Eq.~(9.18)]{Schneider2014} that
  \begin{displaymath}
    V_p(L,M)=\frac{1}{d}\int_{\SS^{d-1}}h(M,u)^p S_{p,0}(L,du),
  \end{displaymath}
  where $S_{p,0}(L,\cdot)$ is a measure on the unit sphere.  It follows
  from Fubini's theorem that
  \begin{align*}
    \E V_p(L,Z)&=\E\left(\frac{1}{d}\int_{\SS^{d-1}}h(Z,u)^p S_{p,0}(L,du)\right) \\
    &=\frac{1}{d}\int_{\SS^{d-1}}\E h(Z,u)^{p} S_{p,0}(L,du) \\
    &=\E(\sup_{0\leq t \leq 1}
    R(t))^p\frac{1}{d}\int_{\SS^{d-1}}h(K,u)^p S_{p,0}(L,du) \\
    &=\E(\sup_{0\leq t \leq 1} R(t))^p V_p(L,K). \qquad \qedhere
  \end{align*}
\end{proof}

\begin{theorem}
  \label{thr:mixed-volume2}
  Let $L$ be a convex body in $\R^d$ with the origin as an interior
  point and $X(t)$, $t\geq0$, be the standard Brownian Motion in $\R^d$.  Then,
  for all $p\in[1,\infty)$,
  \begin{displaymath}
    \E V_p(L,Z)=\frac{2^{p/2}}{\sqrt{\pi}}\Gamma
    \left(\frac{p+1}{2}\right) V_p(L,B^d).
  \end{displaymath}
\end{theorem}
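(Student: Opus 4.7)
The plan is to mimic the argument of Theorem~\ref{thr:mixed-volume1}, replacing the one-dimensional symmetric $\alpha$-stable marginals with Gaussian ones. For the standard Brownian motion $X$ in $\R^d$ and any $u\in\R^d\setminus\{0\}$, the projection $\langle X(t),u\rangle$ is a one-dimensional Brownian motion with variance $t\|u\|^2$. Hence $\langle X(t),u\rangle$ has the same finite-dimensional distributions as $\|u\|W(t)$, where $W$ is a standard one-dimensional Brownian motion, and so
\begin{displaymath}
  \E h(Z,u)^p=\E\Bigl(\sup_{0\leq t\leq 1}\langle X(t),u\rangle\Bigr)^p
  =\|u\|^p\,\E\Bigl(\sup_{0\leq t\leq 1}W(t)\Bigr)^p.
\end{displaymath}

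The key computation is the moment of the running maximum of $W$. By the reflection principle, $\sup_{0\leq t\leq 1}W(t)$ has the same distribution as $|W(1)|$, i.e. the absolute value of a standard Gaussian, so
\begin{displaymath}
  \E\Bigl(\sup_{0\leq t\leq 1}W(t)\Bigr)^p
  =\E|W(1)|^p=\frac{2^{p/2}}{\sqrt{\pi}}\Gamma\Bigl(\frac{p+1}{2}\Bigr),
\end{displaymath}
which is a classical formula for the absolute moments of a standard normal. Crucially, this is finite for every $p\geq 1$, which is why the integrability restriction $p<\alpha$ from Theorem~\ref{thr:mixed-volume1} can be dropped in the Gaussian case.

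To conclude, I would invoke the integral representation $V_p(L,M)=\frac{1}{d}\int_{\SS^{d-1}}h(M,u)^p S_{p,0}(L,du)$ from \cite[Eq.~(9.18)]{Schneider2014} together with Fubini's theorem (justified by the uniform bound $\E h(Z,u)^p=\frac{2^{p/2}}{\sqrt{\pi}}\Gamma(\frac{p+1}{2})h(B^d,u)^p$ and the finiteness of $V_p(L,B^d)$ since $L$ has the origin as an interior point). Noting that $\|u\|=h(B^d,u)$, this yields
\begin{displaymath}
  \E V_p(L,Z)
  =\frac{2^{p/2}}{\sqrt{\pi}}\Gamma\Bigl(\frac{p+1}{2}\Bigr)\cdot
  \frac{1}{d}\int_{\SS^{d-1}}h(B^d,u)^p\,S_{p,0}(L,du)
  =\frac{2^{p/2}}{\sqrt{\pi}}\Gamma\Bigl(\frac{p+1}{2}\Bigr)V_p(L,B^d).
\end{displaymath}

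No substantial obstacle arises: the argument is essentially a verbatim repetition of the proof of Theorem~\ref{thr:mixed-volume1} with $\alpha=2$, the $p$-th moment of $|N(0,1)|$ in place of $\E(\sup R(t))^p$, and $B^d$ in place of the associated zonoid (since for standard Brownian motion $K=\frac{1}{\sqrt{2}}B^d$, and homogeneity of $V_p$ in its second argument absorbs the factor $2^{-p/2}$, matching the constants). The only mild subtlety is checking that the construction in Theorem~\ref{thr:mixed-volume1} extends to all $p\geq 1$ in the Brownian case, which is immediate from the Gaussian tails of $\sup_{[0,1]}W$.
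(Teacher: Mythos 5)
Your proposal is correct and follows essentially the same route as the paper: identify the one-dimensional projections of $Z$ with the running supremum of a one-dimensional Brownian motion, compute its $p$-th moment (the paper states the value for $R=\sqrt{2}W$ directly, you derive it via the reflection principle and the absolute moments of $N(0,1)$), and then integrate against $S_{p,0}(L,\cdot)$ exactly as in Theorem~\ref{thr:mixed-volume1}. The constants match after accounting for $K=B^d/\sqrt{2}$, as you note, so there is nothing to add.
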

\begin{proof}
  It is obvious that the finite-dimensional distributions of the process 
  $\langle X(t),u\rangle$, $t\geq0$, coincide with those of the process
  $h(K,u)R(t)$, where $R(t)=\sqrt{2}W(t)$ for the
  Brownian motion $W$ in $\R$.  Then, 
  for all $p\geq1$, 
  \begin{align*}
    \E(\sup_{0\leq t \leq 1} R(t))^p 
    =\E(\sup_{0\leq t \leq 1} \sqrt{2}W(t))^p 
    =\frac{2^p}{\sqrt{\pi}}\Gamma\left(\frac{p+1}{2}\right), 
  \end{align*}
  so that the result follows by repeating the arguments from the proof
  of Theorem~\ref{thr:mixed-volume1} and noticing that the associated
  zonoid of $X$ is $K=B^{d}/\sqrt{2}$.
\end{proof}

\section{Limit theorems for scaled convex hull}
\label{sec:limit-theorems}

Recall that $T_1$ denotes the first exit time of the process $X$ from
the unit ball. It is obvious that $\E T_1 \leq \E\min\{j\geq1 : \|
X(j)\|>1\}<\infty$, see \cite[Th.~1]{Pruitt1981}.

The standard (Skorohod) metric on the space $D([0,1],\R^d)$ of all
right-continuous $\R^d$-valued functions with left limits defined on
$[0,1]$ is defined by
\begin{displaymath}
  d_{J_1}(f(t),g(t))=\inf_{\lambda\in\Lambda}\{\| f(\lambda(t))-g(t)\|_{\infty}+
  \| \lambda(t)-t\|_{\infty}\},
\end{displaymath}
where $\Lambda$ is the set of strictly increasing functions $\lambda$
mapping $[0,1]$ onto itself, such that both $\lambda$ and its inverse
$\lambda^{-1}$ are continuous.

\begin{lemma}
  \label{lemma:continuity}
  The map that associates with a function $f\in D([0,1],\R^d)$ the
  closed convex hull of its range $\{f(t):\; 0\leq t\leq1\}$ is continuous in
  the Hausdorff metric on $\mathcal{K}_d$. 
\end{lemma}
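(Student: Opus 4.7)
The plan is to exploit the Skorohod representation of $J_1$-convergence directly. Given $f_n\to f$ in $(D([0,1],\R^d),d_{J_1})$, pick time changes $\lambda_n\in\Lambda$ with $\|f_n\circ\lambda_n-f\|_\infty+\|\lambda_n-\mathrm{id}\|_\infty\to 0$. The crucial observation is that each $\lambda_n$ is a bijection of $[0,1]$ onto itself, so $\{(f_n\circ\lambda_n)(t):t\in[0,1]\}=\{f_n(t):t\in[0,1]\}$; in other words, the time change leaves the raw range invariant, which is what makes the range map $J_1$-continuous in the first place.

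The first step is to compare the ranges $R_n=\{f_n(t):t\in[0,1]\}$ and $R=\{f(t):t\in[0,1]\}$. With $\varepsilon_n=\|f_n\circ\lambda_n-f\|_\infty$, the pointwise bound $\|(f_n\circ\lambda_n)(t)-f(t)\|\leq\varepsilon_n$ together with the bijection property above gives $R_n\subseteq R+\varepsilon_n B^d$ and $R\subseteq R_n+\varepsilon_n B^d$, hence $\rho_H(R_n,R)\to 0$. Passing to closures does not affect $\rho_H$, so $\rho_H(\cl R_n,\cl R)\to 0$ as well.

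The second step is to observe that the convex hull is $1$-Lipschitz with respect to $\rho_H$. This reduces to the identity $\conv(B+rB^d)=\conv B+rB^d$, which follows from the convexity of $B^d$ and the general rule $\conv(E+F)=\conv E+\conv F$. If $A\subseteq B+rB^d$, taking convex hulls yields $\conv A\subseteq \conv B+rB^d$, and by symmetry $\rho_H(\conv A,\conv B)\leq\rho_H(A,B)$. Applying this with $A=R_n$ and $B=R$ and then closing gives $\rho_H(\cl\conv R_n,\cl\conv R)\to 0$, which is the continuity asserted in the lemma. Boundedness of each $f\in D([0,1],\R^d)$ ensures that $\cl\conv\{f(t):t\in[0,1]\}$ lies in $\mathcal{K}_d$, so the codomain is correct.

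There is no real obstacle: the whole argument is a short composition of the bijectivity of Skorohod time changes with the Lipschitz behaviour of the convex hull in the Hausdorff metric. The only point worth pausing at is that time changes drop out at the level of ranges, which is precisely what prevents the jump-time mismatch inherent in $J_1$-convergence from causing any difficulty here.
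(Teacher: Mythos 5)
Your proof is correct and follows essentially the same route as the paper: first bound the Hausdorff distance between the ranges by the $J_1$-distance (your bijectivity-of-$\lambda_n$ argument is exactly the detail behind the paper's one-line claim), then use the $1$-Lipschitz behaviour of the convex hull map in $\rho_H$. The only cosmetic difference is that you correctly state the second step as an inequality $\rho_H(\conv A,\conv B)\leq\rho_H(A,B)$, whereas the paper asserts an equality that is not needed (and not true in general).
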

\begin{proof}
  It suffices to note that the Hausdorff distance between the closed ranges
  of $f,g\in D([0,1],\R^d)$ is dominated by $d_{J_1}(f,g)$, and the
  Hausdorff distance between the convex hulls of compact sets equals
  the Hausdorff distance between the sets themselves.
\end{proof}

\begin{theorem}
  \label{thr:lt}
  Assume that $X(t)$, $t\geq0$, is a L\'evy process in $\R^d$ such
  that $X(T_1)$ lies in the domain of attraction of a strictly
  $\alpha$-stable random vector $\eta$, that is the sum of $n$
  i.i.d. copies of $X(T_1)$ scaled by $(n^{1/\alpha}\ell(n))^{-1}$ with a
  slowly varying function $\ell$ converges in distribution to $\eta$.
  Then $(t^{1/\alpha}\ell(t))^{-1} Z_t$ converges in distribution to
  $\conv\{Y(s) : 0\leq s \leq (\E T_1)^{-1}\}$, where $Y$ is a L\'evy
  process such that $Y(1)$ coincides in distribution with $\eta$.
\end{theorem}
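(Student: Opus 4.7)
The strategy is to reduce the limit theorem for the continuous-time process $X$ to a functional invariance principle for the embedded random walk at the exit times $T_n$ from (\ref{eq:Ti}). Setting $\xi_i=X(T_i)-X(T_{i-1})$ and $S_n=X(T_n)=\xi_1+\cdots+\xi_n$, the strong Markov property shows that the $\xi_i$ are i.i.d.\ copies of $X(T_1)$, and the hypothesis becomes $(n^{1/\alpha}\ell(n))^{-1}S_n\Rightarrow\eta$. The Skorokhod functional limit theorem for random walks in the domain of attraction of a strictly $\alpha$-stable law then upgrades this to convergence in $D([0,1],\R^d)$ of $u\mapsto(n^{1/\alpha}\ell(n))^{-1}S_{\lfloor nu\rfloor}$ to the stable L\'evy process $Y$ with $Y(1)\stackrel{d}{=}\eta$.

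To pass from the discrete clock $n$ to the real clock $t$, I would use the renewal structure. Since $\E T_1<\infty$, the counting process $N_t=\max\{n:T_n\le t\}$ satisfies $N_t/t\to(\E T_1)^{-1}$ almost surely, and slow variation of $\ell$ gives $t^{1/\alpha}\ell(t)\sim(\E T_1)^{1/\alpha}N_t^{1/\alpha}\ell(N_t)$. Composing the invariance principle above with this random time change (an Anscombe-type step) then yields that $u\mapsto(t^{1/\alpha}\ell(t))^{-1}S_{\lfloor N_t u\rfloor}$ converges in distribution in $D([0,1],\R^d)$ to $u\mapsto(\E T_1)^{-1/\alpha}Y(u)$, which by self-similarity is equal in law to $u\mapsto Y(u/\E T_1)$.

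Next, the continuous-time path is close to its skeleton: by the definition of $T_i$ one has $\|X(s)-X(T_{i-1})\|\le 1$ for $T_{i-1}\le s<T_i$. Hence, writing $\tilde Z_t=\conv\{S_0,\dots,S_{N_t},X(t)\}$, the Hausdorff bound $\rho_H(Z_t,\tilde Z_t)\le 1$ holds deterministically, so after scaling by $(t^{1/\alpha}\ell(t))^{-1}$ this error vanishes; the overshoot $X(t)-S_{N_t}$ is likewise of norm at most one and does not affect the limit. Applying Lemma~\ref{lemma:continuity} together with the continuous mapping theorem to the functional convergence obtained in the previous paragraph identifies the weak limit of $(t^{1/\alpha}\ell(t))^{-1}Z_t$ with $\conv\{Y(u/\E T_1):u\in[0,1]\}=\conv\{Y(s):0\le s\le (\E T_1)^{-1}\}$.

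The main technical hurdle will be the rigorous justification of the random time change, namely combining the functional invariance principle with the renewal limit $N_t/t\to(\E T_1)^{-1}$ in the Skorokhod topology on $D([0,1],\R^d)$; this is the standard but non-trivial Anscombe-type ingredient for this kind of argument. By contrast, the unit-ball control afforded by the definition of $T_i$ is what makes the passage from the embedded walk back to the continuous-time process essentially automatic and is the reason why no further regularity assumption on $X$ beyond $\E T_1<\infty$ is needed.
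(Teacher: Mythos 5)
Your proposal is correct and follows essentially the same route as the paper: the embedded random walk $S_n=X(T_n)$ with i.i.d.\ increments, the functional invariance principle with stable limits, the random time change via $N_t/t\to(\E T_1)^{-1}$, the continuity of the convex-hull map (Lemma~\ref{lemma:continuity}) with the continuous mapping theorem, and the observation that the path stays within unit distance of the skeleton so the scaled error vanishes. The only cosmetic difference is that the paper normalises $\ell\equiv 1$ at the outset rather than carrying the slowly varying factor through the argument.
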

\begin{proof}
  Without loss of generality assume that $\ell(n)=1$ for all $n$. 
  As in the proof of Theorem~\ref{thr:integrability}, split the path
  of the L\'evy process into several parts using the stopping times
  defined in \eqref{eq:Ti} and consider the renewal process $N_s$.
  Then $S_n=X(T_n)$, $n\geq1$, forms a random walk embedded in the
  process  and 
  \begin{displaymath}
    \conv\{0,S_1,\dots,S_{N_t}\}\subseteq Z_t \subseteq 
    \conv\{0,S_1,\dots,S_{N_t}\} +B^d
  \end{displaymath}
  for all $t>0$, so that
  \begin{displaymath}
    \frac{\conv\{0,S_1,\dots,S_{N_t}\}}{N_t^{1/\alpha}}
    \left(\frac{N_t}{t}\right)^{1/\alpha}
    \subseteq \frac{Z_t}{t^{1/\alpha}}
    \subseteq \frac{\conv\{0,S_1,\dots,S_{N_t}\}}{N_t^{1/\alpha}}
    \left(\frac{N_t}{t}\right)^{1/\alpha}
    +\frac{B^d}{t^{1/\alpha}}.
  \end{displaymath}  
  In the square integrable case, the convergence of
  $n^{-1/2}\conv\{0,S_1,\dots,S_{n}\}$ to the convex hull of the
  Brownian motion is established in \cite[Th.~2.5]{Wade2015}. In the
  general case, Donsker's invariance principle with stable limits
  \cite[Th.~4.5.3]{Whitt2002} implies that $n^{-1/\alpha} S_{\lfloor
    nt\rfloor}$, $0\leq t\leq 1$, converges weakly to the process
  $Y(t)$, $0\leq t \leq1$, in $(D,d_{J_1})$. Using the single
  probability space argument, the convergence holds if $n$ is replaced
  by $N_t$ and $t\to\infty$. Lemma~\ref{lemma:continuity} and
  the continuous mapping
  theorem yield the weak convergence of
  $N_t^{-1/\alpha}\conv\{0,S_1,\dots,S_{N_t}\}$ to $\conv\{Y(t):\; 0\leq
  t\leq 1\}$ in $\mathcal{K}_d$ with the Hausdorff metric.

  The final statement follows from 
  $N_t/t\to (\E T_1)^{-1}$ a.s. as $t\to\infty$,
  taking into account the scaling property of the process $Y$ and the
  fact that $t^{-1/\alpha}B^d \to\{0\}$ as $t\to\infty$. 
\end{proof}

Recall that $\nu$ denotes the L\'evy measure of $X$. 

\begin{theorem}
  \label{thr:lm}
  \begin{itemize}
  \item[(i)] If 
    \begin{equation}
      \label{eq:sm}
      \int_{\|x\|>1} \|x\|^2\nu(dx)<\infty,
    \end{equation}
    then $X(T_1)$ belongs to the domain of attraction of the Gaussian
    law.
  \item[(ii)] Assume that \eqref{eq:sm} does not hold, $\nu(\{x:\;
    \|x\|\geq r\})$, $r>0$, is a regularly varying function at infinity 
    with exponent $-\alpha$ for $\alpha\in(0,2)$, and
    \begin{displaymath}
      \frac{\nu(\{x:\; \|x\|\geq r, x/\|x\|\in A\})}
      {\nu(\{x:\; \|x\|\geq r\})} \to 
      \frac{\Lambda(A)}{\Lambda(\SS^{d-1})} \quad \text{as }\; r\to\infty
    \end{displaymath}
    for all Borel subsets $A$ of the unit sphere
    $\SS^{d-1}$ such that $\Lambda(\partial A)=0$, where $\Lambda$ is a
    finite Borel measure on the unit sphere which is not supported by
    any proper linear subspace. Then $X(T_1)$ belongs to the domain of
    attraction of an $\alpha$-stable law.
  \end{itemize}
\end{theorem}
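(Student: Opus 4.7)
The plan is to analyse $X(T_1)$ through the jump that triggers the exit from $B^d$. Since $\|X(T_1-)\|\le 1$ almost surely, one has $\|X(T_1)\|\le 1+\|\Delta X(T_1)\|$, so all tail and moment information for $X(T_1)$ can be read from the single jump $\Delta X(T_1)$. The main tool is the compensation identity
\begin{displaymath}
  \E\sum_{0<s\le T_1} g(\Delta X(s))=\E[T_1]\int_{\R^d} g(x)\,\nu(dx),
\end{displaymath}
valid for nonnegative Borel $g$ by optional stopping of the predictable compensator of the jump measure, together with the finiteness of $\E T_1$ noted just before the theorem. Since at most one jump of $X$ before $T_1$ equals $\Delta X(T_1)$, this sum dominates $\E[g(\Delta X(T_1))1_{\Delta X(T_1)\ne 0}]$.

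For part~(i), I would apply the identity with $g(x)=\|x\|^2$. The integral $\int \|x\|^2\nu(dx)$ is finite because $\int_{\|x\|\le 1}\|x\|^2\nu(dx)<\infty$ by the definition of a L\'evy measure and $\int_{\|x\|>1}\|x\|^2\nu(dx)<\infty$ by~\eqref{eq:sm}. Hence $\E\|\Delta X(T_1)\|^2<\infty$ and $\E\|X(T_1)\|^2<\infty$, and the classical multivariate central limit theorem places $X(T_1)$ in the domain of attraction of a (possibly degenerate) Gaussian law, non-degenerate in the directions along which $\langle X(T_1),u\rangle$ has positive variance.

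For part~(ii), I would fix a large threshold $r$ and decompose $X=X^{\le r}+X^{>r}$, where $X^{>r}$ is the compound Poisson process of jumps of norm exceeding $r$, with arrival rate $\pi(r):=\nu(\{\|x\|>r\})$ and jump distribution $\nu(\cdot\cap\{\|x\|>r\})/\pi(r)$, independent of $X^{\le r}$. Let $\tau_r$ be the first jump time of $X^{>r}$. For $r>2$, whenever $X$ has not exited $B^d$ by time $\tau_r$, the big jump $J$ satisfies $\|X(\tau_r-)+J\|\ge r-1>1$ and therefore forces the exit, so $T_1=\tau_r$ on this event; for $\rho>r+1$ the inequality $\|X(T_1)\|>\rho$ can occur only on $\{T_1=\tau_r\}$. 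On this event $X(T_1)=X(\tau_r-)+J$ with $X(\tau_r-)=X^{\le r}(\tau_r)\in B^d$ independent of $J$, so for any Borel $A\subset\R^d$ at distance at least $1$ from the origin,
\begin{displaymath}
  \P\{X(T_1)/\rho\in A\}=\P\{(X(\tau_r-)+J)/\rho\in A,\;T_1=\tau_r\}.
\end{displaymath}
Letting $\rho\to\infty$ and then $r\to\infty$, the bounded summand $X(\tau_r-)$ becomes negligible after normalisation, $\P\{T_1=\tau_r\}\to 1$, and the radial and angular asymptotics of $J$ converge to those described by the hypothesis on $\nu$. This identifies the tail of $X(T_1)$ as regularly varying with index $-\alpha$ and limiting angular measure $\Lambda/\Lambda(\SS^{d-1})$, which is the standard criterion for membership in the multivariate $\alpha$-stable domain of attraction.

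The main obstacle lies in part~(ii): one must show not only that $\P\{\|X(T_1)\|>\rho\}$ has the correct radial order $\pi(\rho)$, but also that the \emph{directional} component of the tail inherits precisely the measure $\Lambda$ rather than a distorted version of it. Controlling the cross term arising from $X(\tau_r-)$ uniformly in $r$ while taking the two limits in the correct order is the delicate step; the hypothesis that $\Lambda$ is not supported on a proper linear subspace is needed to ensure the limiting $\alpha$-stable law is genuinely $d$-dimensional, so that the scaling by $n^{1/\alpha}\ell(n)$ produces a non-trivial limit in all directions.
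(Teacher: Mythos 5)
Your part~(i) is correct and is a legitimate variant of the paper's argument. The paper instead uses the L\'evy--It\^o decomposition at level $2$: it writes $X(T_1)$ as a term of norm at most $3$ plus the single possible jump $\xi_1 1_{T_1=\tau_1}$ of the compound Poisson component, and notes that $\xi_1$ has a finite second moment under \eqref{eq:sm}. Your compensation-formula bound $\E\|\Delta X(T_1)\|^2\le \E\bigl[\sum_{0<s\le T_1}\|\Delta X(s)\|^2\bigr]=\E[T_1]\int\|x\|^2\nu(dx)$ reaches the same conclusion and is fine (the interval $[0,T_1]$ is predictable and $\E T_1<\infty$ is noted before the theorem). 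In part~(ii) your overall strategy --- the tail of $X(T_1)$ is produced by the single oversized jump that triggers the exit, the pre-jump position being confined to $B^d$ --- is exactly the paper's, which then invokes \cite[Th.~8.2.18]{Meerschaert2001} for the thinned jump.

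However, part~(ii) as written contains a concretely false step that corrupts the normalisation. You assert that $\P\{T_1=\tau_r\}\to1$ as $r\to\infty$; the opposite holds. Since for $r>2$ at most one jump of norm exceeding $r$ can occur on $(0,T_1]$ and any such jump forces the exit, the compensation formula gives the exact identity
\begin{displaymath}
  \P\{T_1=\tau_r\}=\E\bigl[\#\{s\le T_1:\|\Delta X(s)\|>r\}\bigr]
  =\E[T_1]\,\nu(\{x:\|x\|>r\})\longrightarrow 0 .
\end{displaymath}
This identity is in fact exactly what your computation needs: the factor $\P\{T_1=\tau_r\}$ cancels the normalisation $\nu(\{\|x\|>r\})^{-1}$ in the law of $J$, yielding $\P\{X(T_1)\in\rho A\}\sim\E[T_1]\,\nu(\rho A)$ --- a limit independent of $r$, so the iterated limit ``$\rho\to\infty$ then $r\to\infty$'' is unnecessary, whereas with your premise $\P\{T_1=\tau_r\}\approx1$ the bookkeeping would leave a divergent constant $\nu(\{\|x\|>r\})^{-1}$ in front. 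Separately, you name but do not carry out the key uniformity step: that $\nu(\rho A-x)/\nu(\{y:\|y\|>\rho\})$ converges, uniformly over $\|x\|\le1$, to the value on $A$ of the limit measure of $\nu(\rho\,\cdot)/\nu(\{\|y\|>\rho\})$ for Borel $A$ bounded away from the origin with boundary of limit-measure zero. This is standard (sandwich $A-x/\rho$ between slightly eroded and dilated copies of $A$ and use the null-boundary hypothesis), but it is precisely the ``delicate step'' you defer, and without it and the corrected identity above the proof is not complete.
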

\begin{proof}
  It is well known that a L\'evy process $X$ can be expressed as the
  sum of three independent L\'evy processes $X_1, X_2, X_3$, where
  $X_1$ is a linear transform of a Brownian motion with drift,
  $X_2$ is a compound Poisson process having only jumps of norm
  strictly larger than $2$ and $X_3$ is a pure jump process having jumps of size
  at most $2$.  This decomposition is known as the L\'evy-It\^{o}
  decomposition.  It is obvious that
  \begin{displaymath}
    X(T_1)=X(T_1-)+X_2(T_1)-X_2(T_1-)+X_3(T_1)-X_3(T_1-).
  \end{displaymath}
  Since the norm of $X(T_1-)+X_3(T_1)-X_3(T_1-)$ is at most 3, it is
  in the domain of attraction of the normal distribution.  
  
  The compound Poisson process 
  \begin{displaymath}
    X_2(t)=\sum_{i:\; \tau_i\leq t}\xi_i
  \end{displaymath}
  is given by the sum of i.i.d. random vectors $\xi_i$, $i\geq1$, with
  the distribution given by $\nu$ restricted onto the complement of
  the ball of radius $2$ and normalised to become a probability
  measure. Here $\{\tau_i,i\geq1\}$ is a homogeneous Poisson process
  on $\R_+$. Thus, $T_1\leq \tau_1$ a.s., and $X_2(T_1)-X_2(T_1-)$ is
  zero if $T_1<\tau_1$, while otherwise it equals $\xi_1$. If
  (\ref{eq:sm}) holds, then 
  $\E\|\xi_1\|^2<\infty$, so that $X_2(T_1)-X_2(T_1-)$ belongs to the
  domain of attraction of a normal law. Otherwise, the conditions of
  the theorem guarantee that $X_2(T_1)-X_2(T_1-)$ belongs to the
  domain of attraction of an $\alpha$-stable law, see
  \cite[Th.~8.2.18]{Meerschaert2001}.  Then $X(T_1)$ also belongs to
  the domain of attraction of the same law.
\end{proof}

%%%%%%%%%%%%%%%%%%%%%%%%%%%%%%%%%%%%%%%%%%%%%%%%%%%%%%%%%%%%%%%%%%%
%%                                                               %%
%% Use the two commands below for producing your bibliography    %%
%% with bibtex, then comment again the commands and include the  %%
%% content of the .bbl file in this file below the commands.     %%
%%                                                               %%
%%%%%%%%%%%%%%%%%%%%%%%%%%%%%%%%%%%%%%%%%%%%%%%%%%%%%%%%%%%%%%%%%%%

\bibliographystyle{amsplain}
%\bibliography{references}
% add below the content of your .bbl file produced by bibtex.

\providecommand{\bysame}{\leavevmode\hbox to3em{\hrulefill}\thinspace}
\providecommand{\MR}{\relax\ifhmode\unskip\space\fi MR }
% \MRhref is called by the amsart/book/proc definition of \MR.
\providecommand{\MRhref}[2]{%
  \href{http://www.ams.org/mathscinet-getitem?mr=#1}{#2}
}
\providecommand{\href}[2]{#2}

%%%%%%%%%%%%%%%%%%%%%%%%%%%%%%%%%%%%%%%%%%%%%%%%%%%%%%%%%%%%%%%%%%%
%%                                                               %%
%% You may add acknowledgments (optional).                       %%
%%                                                               %%
%%%%%%%%%%%%%%%%%%%%%%%%%%%%%%%%%%%%%%%%%%%%%%%%%%%%%%%%%%%%%%%%%%%

\ACKNO{IM is grateful to Gennady Samorodnitsky for the idea of path
  splitting used in the proof of Theorem~\ref{thr:integrability} and
  comments on the draft of this work. Bojan Basrak helped to come up
  with the current variant of Theorem~\ref{thr:interior-point}. The
  authors are indebted to the referee who suggested improvements to
  the proofs of several results.}

%%%%%%%%%%%%%%%%%%%%%%%%%%%%%%%%%%%%%%%%%%%%%%%%%%%%%%%%%%%%%%%%%%%
%%                                                               %%
%% You have reached the end of your document.                    %%
%%                                                               %%
%%%%%%%%%%%%%%%%%%%%%%%%%%%%%%%%%%%%%%%%%%%%%%%%%%%%%%%%%%%%%%%%%%%

\end{document}